\newtheorem{theorem}{Theorem}[section]
\newtheorem{proposition}[theorem]{Proposition}
\newtheorem{lemma}[theorem]{Lemma}
\def\Empty{}
\def\section{\@startsection {section}{1}{\z@}{-3.5ex plus -1ex minus 
-.2ex}{2.3ex plus .2ex}{\large\bf}}
\def\fnum@figure{{\small Figure \thefigure}}
\def\fakefigure{\def\@captype{figure}}
\long\def\@makecaption#1#2{
    \vskip 10pt 
    \def\FCap{#2} \def\NoCap{\ignorespaces}
    \ifx \FCap\NoCap
       \setbox\@tempboxa\hbox{#1}  % This is to avoid the damn colon.
      \else
       \setbox\@tempboxa\hbox{#1: \small \it #2}
    \fi
    \ifdim \wd\@tempboxa >\hsize   % IF longer than one line:
        \unhbox\@tempboxa\par      %   THEN set as ordinary paragraph.
      \else                        %   ELSE  center.
        \hbox to\hsize{\hfil\box\@tempboxa\hfil}  
    \fi}
\def\@oddhead{\hbox{}\rightmark \hfil \rm\thepage}% Right heading.
\def\sectionmark#1{\markright {\sc{\ifnum \c@secnumdepth >\z@
      \S\thesection.\hskip 1em\relax \fi #1}}}
\def\oplabel#1{
  \def\OpArg{#1} \ifx \OpArg\Empty {} \else
  	\label{#1}
  \fi}
\newlength{\saveu}
\newcommand{\uu}{\mbox{${\cal U}$}}
\newcommand{\hhs}{\mbox{${\cal H} ^s$}}
\newcommand{\hp}{\mbox{${\cal H}$}}
\newcommand{\pin}{\mbox{$\partial _{\infty}$}}
\newcommand{\oo}{\mbox{$\cal O$}}
\newcommand{\tz}{\mbox{$\cal T$}}
\newcommand{\tzs}{\mbox{${\cal T}^s$}}
\newcommand{\tzu}{\mbox{${\cal T}^u$}}
\newcommand{\oos}{\mbox{${\cal O}^s$}}
\newcommand{\oou}{\mbox{${\cal O}^u$}}
\newcommand{\rrrr}{\mbox{${\bf R}$}}
\newcommand{\mi}{\mbox{$\widetilde M$}}
\newcommand{\wws}{\mbox{$\widetilde W^s$}}
\newcommand{\wwp}{\mbox{$\widetilde \Phi$}}
\newcommand{\fol}{\mbox{$\cal F$}}
\newcommand{\gal}{\mbox{$\cal G$}}
\newcommand{\gn}{\mbox{$\widetilde {\cal G}$}}
\newcommand{\fs}{\mbox{${\cal F} ^s$}}
\newcommand{\fu}{\mbox{${\cal F} ^u$}}
\newcommand{\fnsE}{\mbox{$\widetilde {\cal F}^s_E$}}
\newcommand{\fnuE}{\mbox{$\widetilde {\cal F}^u_E$}}
\newcommand{\lsl}{\mbox{$\widetilde {\cal L}^s_L$}}
\newcommand{\lul}{\mbox{$\widetilde {\cal L}^u_L$}}
\newcommand{\fnsL}{\mbox{$\widetilde {\cal F}^s_L$}}
\newcommand{\fnsLi}{\mbox{$\widetilde {\cal F}^s_{L_i}$}}
\newcommand{\fnuL}{\mbox{$\widetilde {\cal F}^u_L$}}
\newcommand{\fns}{\mbox{${\widetilde {\cal F}}^s $}}
\newcommand{\fnu}{\mbox{${\widetilde {\cal F}}^u $}}
\newcommand{\ws}{\mbox{$\widetilde  W^s$}}
\newcommand{\wu}{\mbox{$\widetilde  W^u$}}
\def\@evenhead{\rm \leftmark \hfil \thepage}%        Left heading.
\def\chaptermark#1{\markboth {\sc {\ifnum \c@secnumdepth >\m@ne
      \@chapapp\ \thechapter. \ \fi #1}}{}}%
\begin{document}

\title{Rigidity of pseudo-Anosov flows transverse to $\rrrr$-covered foliations}
\author{S\'{e}rgio R. Fenley
\thanks{Reseach partially supported by NSF grant 
DMS-0305313.}
\footnote{Mathematics Subject Classification.
Primary: 37C15, 37D20, 37C85, 37D50; Secondary: 
57M50, 57M60, 57R30.}
}
\maketitle

\vskip .2in

{\small{
\noindent
{\bf {Abstract}} $-$ 
A foliation is $\rrrr$-covered if the leaf space in the universal cover
is homeomorphic to the real numbers.
We show that, up to topological conjugacy,
there are at most two pseudo-Anosov flows transverse such a foliation.
If there are two, then the foliation is weakly conjugate to the
stable foliation of an $\rrrr$-covered Anosov flow.
The proof uses the universal circle for $\rrrr$-covered foliations.
}}

\vskip .2in
\section{Introduction}
Pseudo-Anosov flows  are extremely common amongst $3$-manifolds 
\cite{GK1,Mo2,Fe2,Cal2,Cal3} and they yield important topological
and geometrical information about the manifold.
For example they imply that the manifold is irreducible and the universal
cover is homeorphic to $\rrrr^3$ \cite{Ga-Oe}. There are also
relations
with the atoroidal property \cite{Fe3}. Finally there are consequences
for the large scale geometry of the universal cover
when the manifold is atoroidal:
In that case it follows that the fundamental group is Gromov 
hyperbolic \cite{GK2} and in certain cases the dynamics structure of the flow
produces a flow ideal
boundary to the universal cover which is equivalent to the Gromov boundary
and yields many geometric results \cite{Fe7}.

As for the existence of pseudo-Anosov flows, it turns out that
many classes of Reebless, foliations in atoroidal $3$-manifolds
admit transverse or almost transverse pseudo-Anosov which are
constructed using the foliation structure: \
1) fibrations over the circle \cite{Th1},  \ 2) finite depth foliations \cite{Mo2},
\ 3) $\rrrr$-covered foliations \cite{Fe2,Cal2} \ and 4) Foliations
with one sided branching \cite{Cal3}. Pseudo-Anosov flows also
survive under the majority of Dehn surgeries on closed orbits
\cite{GK1}, which makes them extremely common. 
On the other hand there are a few examples of
non existence of
pseudo-Anosov flows in certain specific manifolds: see \cite{Br} for
examples in Seifert fibered spaces and  \cite{Ca-Du,Fe5} for examples
in hyperbolic manifolds.

In this article we consider the uniqueness question for such flows:
Up to topological conjugacy, how many pseudo-Anosov flows are there in
a closed $3$-manifold?
Topological conjugacy means that there is a homeomorphism of the
manifold sending orbits to orbits.
% and preserving the flow direction.
The less flows there are, the more rigid these flows are and
consequently more likely to give information about the manifold.
In this generality the question is, at this point, very hard to
tackle. 
Here we start the study of this question and 
we consider how many pseudo-Anosov flows are there
transverse to a given foliation.
This is very natural, since as explained above, many pseudo-Anosov flows
are constructed from the structure of a given foliation and are
transverse to it.
We will consider a certain class of foliations called {\em $\rrrr$-covered}:
this means that the leaf space in the universal cover is  homeomorphic to
the set of real numbers \cite{Fe2}.
This is the simplest situation with respect to this question.
The uniqueness analysis involves a detailed understanding of the
topology and geometry of the foliation and flow in this case.

There are many examples of $\rrrr$-covered foliations: 1) Fibrations over
the circle; 2) Many stable and unstable foliations of Anosov flows,
which are then called $\rrrr$-covered Anosov flows. These include 
geodesic flows of hyperbolic surfaces and many examples in
hyperbolic $3$-manifolds \cite{Fe1};
3) {\em Uniform} foliations \cite{Th2}: this means that given any two leaves of the
lifted foliation in the universal cover, they are a bounded distance from
each other. Obviously the bound depends on the pair of leaves. This is 
associated with slitherings over the circle \cite{Th2};
4) Many examples of $\rrrr$-covered but not uniform foliations
in hyperbolic $3$-manifolds \cite{Cal1}.

We should remark that in this article pseudo-Anosov flows include flows without
singularities, that is (topological) Anosov flows.
On the other hand, we do not allow $1$-prong singularities. With $1$-prongs
almost all control is lost, for example ${\bf S}^2 \times {\bf S}^1$ has
a pseudo-Anosov flow with $1$-prongs and the manifold is not even irreducible.

A flow transverse to a foliation is {\em regulating} if an arbitrary orbit in
the universal cover intersects every leaf of the lifted foliation.
In particular this implies that the foliation is $\rrrr$-covered.
This is strongly related to the atoroidal property:
Given an $\rrrr$-covered foliation with a transverse, regulating  pseudo-Anosov
flow, it follows that either the manifold is atoroidal \cite{Fe3}
or it fibers over the circle with fiber a torus and Anosov monodromy.
Conversely if the manifold is atoroidal and acylindrical, then there is a regulating,
pseudo-Anosov
flow transverse to the $\rrrr$-covered foliation \cite{Fe2,Cal2}.
So transverse pseudo-Anosov flows are as general as possible in this situation and
the uniqueness question is a very natural one in this setting.

There is one case where the uniqueness question for transverse flows is known,
which is the simplest case of foliations:
a fibration over the circle. 
It is easy to see that any transverse
flow is regulating.
Any two transverse flows induce homotopic and hence isotopic 
monodromies of the fiber $S$.
This works even if the flow is not pseudo-Anosov. 
If the flow is pseudo-Anosov, then the associated monodromy is a
pseudo-Anosov homeomorphism of $S$ \cite{Th1}.
In particular the fiber cannot be the sphere or the projective plane.
If the fiber is Euclidean, then the flow has no singularities
and is a topological Anosov flow. In this case it is not hard to prove
that there is at most one transverse pseudo-Anosov flow up to conjugacy.
Suppose then that the fiber is hyperbolic and therefore the monodromy
is pseudo-Anosov with singularities.
It is proved in \cite{FLP}, expos\'{e} 12,
 that any two homotopic pseudo-Anosov homeomorphisms are
in fact conjugate. This implies that the corresponding flows are also topologically
conjugate and consequently in this case there is only one transverse pseudo-Anosov flow up
to conjugacy.

This result turns out to be very close to what
happens in general for $\rrrr$-covered foliations:

\vskip .1in
\noindent
{\bf {Theorem}} $-$ Let $\gal$ be an $\rrrr$-covered foliation in $M^3$ closed.
Then up to topological conjugacy there is at most one transverse pseudo-Anosov flow which 
is regulating for $\gal$. In addition, up to conjugacy, there is also at most one non regulating
transverse pseudo-Anosov flow to $\gal$.
If there is a transverse pseudo-Anosov flow  which is non regulating for $\gal$, 
then the flow has no singular orbits and is a topological Anosov flow.
In addition in  this case, after a blow down of
foliated $I$-bundles, then the resulting foliation $\gal'$ is conjugate to either the stable or the unstable
foliation
of the Anosov flow.
\vskip .1in

Consequently if $\gal$ is not a blow up of the stable/unstable foliation of an 
$\rrrr$-covered Anosov flow then up to topological conjugacy,
there is at most one pseudo-Anosov flow transverse to $\gal$.
Furthermore there is one such flow if $M$ is atoroidal.

A {\em foliated $I$-bundle} of $\gal$ is an $I$-bundle $V$ embedded in $M$ so that
$V$ is a union of leaves of $\gal$, which are transverse to the $I$-fibers
in $V$. In particular the boundary of $V$ is an union of leaves of $\gal$.
In general the base of the bundle is not a compact surface.
The blow down operation collapses a foliated $I$-bundle onto a single leaf,
by collapsing $I$-fibers to points.
In the theorem  above
one may need to do this blow down operation a countable number of times.
With reference to the abstract of this article, the phrase $\gal$ is weakly conjugate to 
a foliation $\fol$, means that some blow down $\gal'$ of $\gal$ is 
topologically conjugate to $\fol$.

This theorem generalizes the result for fibrations, because as explained above in that case
any transverse flow is regulating.

\vskip .1in
In order to prove the theorem we split into two cases: the regulating and non
regulating situations.
The non regulating case was studied in \cite{Fe4} where the second part of the
theorem is proved. Here is an outline of that result.
The result uses the topological theory of pseudo-Anosov flows,
see \cite{Fe4,Fe6}.
In the universal cover $\mi$ of $M$, the lifted
flow
has stable and unstable foliations. Since 
$\gal$ is $\rrrr$-covered there is only one transverse
direction to the lift $\gn$ of the foliation $\gal$ to $\mi$.
After a considerable
analysis, this implies that there is only one transverse direction to the
stable and unstable foliations of the flow
in the universal cover. In particular we show that there are no
singularities of the flow $-$ it is a (topological) Anosov flow. In addition we
prove that 
the stable and unstable foliations of the flow $-$ which now are non  singular
foliations $-$ are $\rrrr$-covered foliations.
Therefore the flow is an $\rrrr$-covered Anosov flow. 

The next step is to show that for each
leaf of $\gn$ there is a well defined 
stable (or unstable) leaf in the universal cover associated to it and these 
two leaves (one stable/unstable and the other a leaf of $\gn$) are a bounded
Hausdorff distance from each other.
For simplicity assume they are stable leaves.
After collapsing foliated $I$-bundles
of $\gal$, this correspondence between leaves of the stable foliation
in the universal cover
and leaves of $\gn$ is a bijection. 
Since the leaf of $\gn$ and the corresponding stable leaf are a bounded 
Hausdorff distance
from each other, there is a map between them which sends
a point in one leaf to a point at a bounded distance in the other leaf.
As both foliations are $\rrrr$-covered then this map is a quasi-isometry.
Since leaves of the stable foliation are Gromov hyperbolic \cite{Pl,Su}
and any leaf of $\gn$ is quasi-isometric to a stable leaf,
it follows that the leaves of $\gn$ are
also Gromov hyperbolic.
In particular in the non regulating case, there are no parabolic leaves in $\gal$.
In \cite{Fe4} the analysis was done under the assumption that leaves of 
$\gal$ are Gromov hyperbolic. The argument above shows that this assumption is not 
necessary.
%5as it follows from the quasi-isometry with leaves of the stable foliation.
Using a result of Candel \cite{Can}, we can assume that the leaves of
$\gal$ are hyperbolic leaves.

The next step is to show that for each flow line
in a fixed leaf of the stable foliation in the universal cover there is a unique geodesic in
the corresponding leaf of $\gn$, so that they are a bounded Hausdorff distance
from each other. 
These geodesics in leaves of $\gn$ jointly produce a flow, which projects to a flow
in $M$ which is Anosov and whose flow lines are contained in leaves of $\gal$.
The construction shows that the new Anosov flow is conjugate to the original one
and therefore
$\gal$ is topologically conjugate to
the stable foliation of the original Anosov flow.

We remark that it is very easy to construct non regulating examples for
certain foliations: let $\gal$ be
the stable foliation of an $\rrrr$-covered Anosov flow $\Psi$.
Perturb the flow $\Psi$ slightly along the unstable leaves, to produce
a new Anosov flow $\Phi$ which is transverse to $\gal$ and non regulating
for $\gal$.

\vskip .1in
In this article we consider the regulating  situation. 
The proof is completely different from the non regulating case: 
in that case the proof was internal to $\mi$ $-$ we only used
the topology of the pseudo-Anosov flow and showed that stable/unstable
leaves and leaves of $\gn$ are basically parallel to each other.
Clearly this cannot happen in the regulating situation.
In the regulating case we use the asymptotics of the foliation,
contracting directions between leaves, the universal circle for foliations 
and relations of these with the flow.
We show that the universal circle of the foliation
can be thought of as an ideal boundary for the orbit space
of a regulating pseudo-Anosov flow and this can be used to
completely determine the flow from outside in $-$ from the
universal circle 
ideal boundary to the universal cover of the manifold in an 
equivariant way.

The proof of the theorem goes as follows. 
Let 
$\Phi$ be transverse and regulating for the foliation $\gal$.
Suppose first that there is a parabolic leaf.
Then we actually show that there has to be a compact leaf which is parabolic.
Hence 
the manifold
fibers over the circle  with fiber this leaf and the
flow is topologically conjugate to a suspension Anosov flow.
In this case there is at most one pseudo-Anosov flow transverse to $\gal$,
since there cannot be a non regulating transverse pseudo-Anosov flow.
This is done in section \ref{para}.

In the case that all leaves are Gromov hyperbolic, 
use Candel's theorem \cite{Can} 
and assume the leaves are hyperbolic.
The orbit space of a pseudo-Anosov flow is the space
of orbits in the universal cover. It is always homeomorphic to the plane 
\cite{Fe-Mo} and the fundamental group of the manifold acts naturally 
in this orbit space.
Given two regulating pseudo-Anosov flows transverse to $\gal$ we produce
a homeomorphism between the corresponding orbit spaces, which is group equivariant.
This is the main step here.
Using the foliation $\gn$ which is transverse to each lifted flow, this produces a homeomorphism of 
the universal cover of the manifold, which takes orbits of one flow to orbits of the other flow
and is group equivariant. This produces the conjugacy.

In order to produce the homeomorphism between the orbit spaces,
we use in an essential way the universal circle for foliations
as introduced by Thurston \cite{Th2,Th3,Th4}.
For $\rrrr$-covered foliations, the universal circle is canonically identified to the circle
at infinity of any leaf of $\gn$ \cite{Fe2,Cal2}. 
Notice that the universal circle depends only on the foliation and not on
the transverse pseudo-Anosov flow.
We first consider only one pseudo-Anosov flow transverse to $\gal$.
We show that the orbit space of the flow in $\mi$ can be compactified
with the universal circle of the foliation to produce a closed disk.
This is canonically identified with the standard compactification of
of any hyperbolic leaf of $\gn$.
Here one has to show that the orbit space of the flow in $\mi$ and the universal circle
of the foliation are compatible with the topology of the leaves of 
$\gn$ and also that this topology is independent of the particular
leaf of $\gn$.
To prove this fact, 
one has to distinguish between uniform and non uniform foliations.
Recall that uniform means that any two leaves of $\gn$ are a finite Hausdorff distance
from each other $-$ for example fibrations over the circle.
The uniform case is simple. The non uniform
case requires arguments involving the denseness of contracting directions between leaves, after
a possible blow down of foliated $I$-bundles.
Using the same ideas we 
analyse how stable/unstable leaves in the universal cover intersect leaves of $\gn$,
particularly with relation to the universal circle.
We proved in \cite{Fe6} that for any pseudo-Anosov flow transverse to a 
foliation with hyperbolic leaves the following happens: given any ray in the 
intersection of a stable/leaf (in the universal cover) with  a leaf of $\gn$, then
this ray limits to a
single point in the circle at infinity of this leaf of $\gn$.
In this article
we show if $\gal$ is 
$\rrrr$-covered then given a fixed stable (or unstable leaf) and varying the leaf of $\gn$, then
these ideal points in different leaves of $\gn$
follow the identifications prescribed by the universal circle.
So clearly the universal circle is intrinsically connected with any 
regulating, transverse
pseudo-Anosov flow.
This is done in section 4.
These two results are the key tools used in the analysis of the theorem.

The next step is 
to analyse how an element of the fundamental
group acts on the universal circle. 
If an element of the fundamental group is associated to a closed orbit of
the flow, then we show that some power of it acts on the universal circle with 
a finite even number $\geq 4$ of fixed points and vice versa.
This key result depends on the analysis in section 4 and 
on further properties of the intersections of leaves
of $\gn$ and stable/unstable leaves, which is done in section 5.

Finally in section 6 we consider two pseudo-Anosov flows transverse and regulating for $\gal$.
We first prove that for each lift of a periodic orbit
of the first flow, there is a unique periodic orbit of the second flow associated
to it. 
This depends essentially on the study of group actions in section 5.
This produces a map between the orbit spaces of the two flows
restricted to lifts of closed orbits.
The final step is to show that this can be extended to an equivariant homeomorphism between
the orbit spaces.
This finishes the proof of the theorem.

\section{The case of parabolic leaves}
\label{para}

Leaves of the foliation $\gal$ are conformally either spherical, Euclidean or
hyperbolic. In this section we rule out the first case and prove the
theorem in the second case.
We say that a leaf is {\em parabolic} if it is conformally Euclidean.
These terms will be used interchangeably.

The stable and unstable foliations of $\gal$
induce $1$-dimensional perhaps singular foliations in a leaf $F$ of $\gal$.
Since there are no $1$-prongs in the stable foliation and no centers, then Euler 
characteristic disallows the existence of spherical leaves.

\begin{theorem}{}{}
Let $\gal$ be an $\rrrr$-covered foliation transverse to a pseudo-Anosov flow
$\Phi$. If $\gal$ has a parabolic leaf, then there is a compact
leaf $C$ which is parabolic 
and $M$ fibers over the circle with fiber $C$. 
In this case the flow is an Anosov flow and is a suspension flow with fiber $C$.
Therefore if an $\rrrr$-covered foliation $\gal$ has a parabolic 
leaf, then up to topological conjugacy, there is at most one pseudo-Anosov
flow transverse to $\gal$.
\end{theorem}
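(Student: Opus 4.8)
\noindent
The plan is to reduce the whole statement to one assertion, namely that a parabolic leaf forces the existence of a \emph{compact} (torus) leaf; everything else then follows from standard rigidity of $\rrrr$-covered foliations and of suspension Anosov flows. By the discussion preceding the statement, Euler characteristic together with the absence of $1$-prongs and centers rules out spherical leaves, so a parabolic leaf $F$ is conformally Euclidean. I would first invoke Candel's theorem \cite{Can}: since $\gal$ has a non-hyperbolic leaf and no spherical leaves, there is a holonomy-invariant transverse measure carried by the union of parabolic leaves, and each such leaf carries a complete flat leafwise metric. A complete flat surface is a quotient of ${\bf R}^2$, hence $F$ is a torus, a Klein bottle, a cylinder or M\"obius band, or a plane. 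If $F$ is a torus (or, passing to an orientation double cover, a Klein bottle) we have the desired compact leaf, so the whole problem is to eliminate the noncompact alternatives.

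This elimination is the main obstacle. The two tools I would use are the invariant transverse measure and the transverse flow. The fundamental group $\pi_1(M)$ acts on the leaf space of $\gn$, which is ${\bf R}$, and the Candel measure gives a $\pi_1(M)$-quasi-invariant (invariant up to scale) measure on this line supported on the parabolic leaves. A noncompact flat leaf would correspond to a point of this line whose stabilizer acts without a cocompact orbit on the corresponding leaf; I would argue that the interplay of the scaling action on ${\bf R}$ with the finiteness of the transverse measure near a parabolic leaf forces a fixed point of positive mass, i.e. a closed leaf, which by flatness must be a torus $C$. Equivalently, working in $\mi$ with the lifted plane $\widetilde F$ and the two transverse trace foliations $\fs \cap \widetilde F$ and $\fu \cap \widetilde F$, the transverse contraction and expansion of $\Phi$ make these traces behave like the eigendirection foliations of a linear hyperbolic system; recurrence forced by compactness of $M$ is incompatible with a noncocompact lattice, so $\pi_1(F)$ must be a cocompact \zz, giving $F = T^2$. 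I expect the delicate point to be making this incompatibility precise: controlling the conformal type of an equivariant family of flat leaves and showing the transverse measure cannot ``escape to infinity'' along a cylinder or plane leaf, where the $\rrrr$-covered hypothesis is essential to keep the trace foliations product-like.

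Granting a compact torus leaf $C$, I would conclude as follows. An $\rrrr$-covered foliation containing a compact leaf is a fibration over the circle with that leaf as fiber: the compact leaf has trivial holonomy, for otherwise the leaf space of $\gn$ could not be homeomorphic to ${\bf R}$, and Reeb stability then makes the set of compact leaves open and closed, so $\gal$ fibers with fiber $C$. Since $C$ is a torus, $\chi(C)=0$, and with no $1$-prongs the induced foliations $\fs \cap C$ and $\fu \cap C$ are nonsingular; hence no singular orbit of $\Phi$ meets $C$ and $\Phi$ is a genuine Anosov flow near $C$. Any flow transverse to a fibration over the circle is regulating, so $C$ is a global cross section and $\Phi$ is topologically conjugate to the suspension of its first return map $\phi \colon C \to C$. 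As $\phi$ is a pseudo-Anosov homeomorphism of the torus, it is a hyperbolic toral automorphism, so $\Phi$ is a suspension Anosov flow, as claimed.

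Finally I would read off uniqueness. Two pseudo-Anosov flows transverse to the fibration $\gal$ are both regulating, hence both suspensions of first return maps $\phi_1, \phi_2 \colon C \to C$. Each $\phi_i$ is isotopic to the geometric monodromy of the bundle $\gal$, so $\phi_1$ and $\phi_2$ are homotopic Anosov automorphisms of $T^2$; by the classical rigidity of linear Anosov maps they are both topologically conjugate to the same element of $SL_2({\bf Z})$, hence to each other. Suspending a conjugacy between the return maps yields a topological conjugacy carrying orbits of the first flow to orbits of the second, so up to topological conjugacy there is at most one transverse pseudo-Anosov flow. In particular there can be no non-regulating transverse flow, consistent with the count, which completes the argument.
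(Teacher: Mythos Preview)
Your overall architecture is sound: reduce to finding a compact parabolic leaf, then deduce fibration, suspension Anosov, and uniqueness. The final two paragraphs are essentially correct and match the paper's reasoning (the paper cites Goodman--Shields \cite{Go-Sh} rather than Reeb stability for the fibration step, but your version is close enough). The genuine gap is exactly where you flag it: the elimination of noncompact flat leaves. What you wrote there is not a proof but a wish list. The phrases ``interplay of the scaling action with the finiteness of the transverse measure forces a fixed point of positive mass'' and ``recurrence forced by compactness of $M$ is incompatible with a noncocompact lattice'' do not pin down any actual mechanism; there is no reason a priori why an invariant transverse measure on the leaf space cannot be nonatomic, and the trace foliations on a plane or cylinder leaf can look perfectly product-like without forcing that leaf to be a torus.

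The paper's argument is organised differently and rests on a tool you do not invoke: exponential growth of $\pi_1(M)$ in the presence of a pseudo-Anosov flow \cite{Pl-Th}. Rather than arguing that the given parabolic leaf must itself be compact, the paper splits into two cases. If $\gal$ already has some compact leaf, then (after passing to the unique minimal set and using the $I$-bundle structure of complementary regions) the parabolic leaf limits on a compact leaf, which is therefore also parabolic. If $\gal$ has \emph{no} compact leaf, one blows down to a minimal foliation with all leaves parabolic, hence planes, annuli, or tori. All planes is impossible by Rosenberg \cite{Ros}, since then $M \cong T^3$ and $\pi_1(M)$ has polynomial growth. Otherwise there is an annulus leaf; the paper then constructs by hand an embedded torus transverse to $\gal$ and foliated by circles, cuts along it, and shows the resulting pieces force $M$ to be a nilmanifold, again contradicting exponential growth. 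So the missing idea in your proposal is this growth dichotomy: the pseudo-Anosov flow is used not through its local dynamics on a leaf, but through the single global fact that $\pi_1(M)$ grows exponentially.
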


\begin{proof}{}
We start by proving the first statement.
If the pseudo-Anosov flow
$\Phi$ is not regulating for $\gal$ then as explained in the introduction,
the leaves of $\gal$ are Gromov hyperbolic and therefore not conformally 
Euclidean. 
Therefore $\Phi$ has to be regulating.

We assume first that $M$ is orientable.

Let $L$ be a parabolic leaf of $\gal$.

Suppose first that $\gal$ has a compact leaf. Since $\gal$ is $\rrrr$-covered,
it was shown by Goodman and Shields \cite{Go-Sh} that any compact leaf is
a fiber of $M$ over the circle.
We first want to show that there is a compact leaf which is parabolic.
This is not true in general, but it holds for $\rrrr$-covered foliations.
If the parabolic leaf
$L$ is compact we are done. Suppose then that $L$ is not compact.
We want to show that $L$ limits in a compact leaf.
In this case let $O$ be the closure of the component of the complement of
the compact leaves which contains $L$.
This component is a product of a closed surface $C$ times a closed interval
and in addition we can assume that $\gal$ is transverse
to the $I$-fibration in $O$
(see \cite{Fe2}).
Identify $C$ with the lower boundary of $O$. 
Look at the points that $L$ hits in a fixed $I$ fiber $J$. Take the
infimum of these points, call it $x$. If $x$ is in the boundary of
of $O$ we are done. The foliation in $O$ is determined by its holonomy
which is a homomorphism of $\pi_1(C)$ into the group homeomorphisms of $J$.
This holonomy has to fix $x$ for otherwise some element would bring
$x$ closer to $C$ and hence $L$ would have a point in $J$ lower than
$x$. Since the holonomy fixes $x$ then the leaf through $x$ is
compact, contrary to assumption that there are no compact leaves in
the interior of $O$.

Therefore $L$ limits on a compact leaf $C$. Since $L$ is parabolic,
then so is $C$. 
Since $\Phi$ is regulating for $\gal$ then every orbit through
$C$ intersects $C$ again, in other words $\Phi$ is a suspension flow
and the cross section is an Euclidean surface. In particular 
$\Phi$ is an Anosov flow.
Any two pseudo-Anosov flows transverse to $\gal$ will generate suspension
flows in $M$ transverse to $C$. As explained in the introduction,
any two such flows are topologically conjugate.
This finishes the analysis (in the orientable case) when there is a compact
leaf.

Suppose now that there is no compact leaf.
As proved in proposition 2.6 of \cite{Fe2} there is a unique minimal set
${\cal Z}$ in $\gal$. Since $L$ must limit in leaves in a minimal
set, then there are parabolic leaves in the minimal set, and hence
all leaves in the minimal set are parabolic.
There are at most countably many components in $M - {\cal Z}$ each
of which has a closure which is an $I$-bundle over a non compact surface.
In addition the flow can be taken to be the $I$-fibration in
this closure \cite{Fe2}.
Therefore these $I$-bundles can be blown down to leaves to yield a foliation
which is still transverse to $\Phi$ and is a minimal foliation.
Clearly this happens for any pseudo-Anosov flow transverse to $\gal$.
Therefore we may assume in this case that $\gal$ is minimal.

If all leaves of $\gal$ are planes then Rosenberg \cite{Ros} proved that
$M$ is homeomorphic to the $3$-torus and hence $\pi_1(M)$ has polynomial
growth of degree $3$. On the other hand a manifold with a pseudo-Anosov
flow has fundamental group with exponential growth \cite{Pl-Th}.
Therefore this case cannot happen.

Since $\gal$ is minimal and has a parabolic leaf, then all leaves of
$\gal$ are parabolic. Since $M$ is orientable and $\gal$ is transversely
orientable (there is a transverse flow), the leaves of $\gal$ are 
either planes, annuli or tori. We took care of the case when there
is a toral leaf and also the case when all leaves are planes. Hence there is a leaf, call if
$F$, which is an annulus. 
Since $F$ has polynomial growth, then Plante \cite{Pl} showed that
there is a holonomy invariant transverse measure supported in the
closure of $F$. Since $F$ is dense, this shows that the support of the
measure is all of $M$.

Notice that by Tischler's theorem \cite{Ti} (for the $C^0$ case by
Imanishi see \cite{Im}), then $M$ fibers over the circle and $\gal$ is
approximated arbitrarily close by fibrations.
But we need more information.

The next step is to construct an incompressible torus $T$ transverse to
$\gal$ and foliated by circles.
This will take a while.
Let $\gamma$ be a simple closed curve in $F$ which is not null 
homotopic in $F$.
Let $B$ be a small closed annulus transverse to $\gal$ and with
one boundary $\gamma$. Since there is no holonomy in $\gal$ the foliation
induced by $\gal$ in $B$ is a foliation by circles near $\gamma$ and
we may assume the other boundary leaf is also a closed curve
in a leaf of $\gal$.
Each of these circles is not null homotopic in its leaf, for otherwise
$\gamma$ would be null homotopic in $M$ contradicting Novikov's 
theorem \cite{No}.

Starting from $\gamma$, move along $F$ in a particular side (call it the
right side of $B$) until hitting $B$ again. This is possible since
leaves of $\gal$ are dense in $M$ and $\gamma$ bounds a half non compact annulus in
that side.
The first time this half annulus hits $B$ again, then it hits $B$
in a closed curve $\gamma_1$ of
the induced foliation of $\gal$ in $B$. Since $F$ is an annulus then $\gamma$ and $\gamma_1$
bound a closed annulus $A_1$ in $A_1$. We think of $\gamma, \gamma_1$ as
oriented, freely homotopic curves in $F$.
Let $B_1$ be the closed subannulus of $B$ bounded by $\gamma, \gamma_1$.
If $A_1$ approaches $B_1$ from the left side then $\gamma_1$ is freely 
homotopic to $\gamma$ in $B_1$ (since $M$ is orientable and $\gal$
transversely orientable). In this case let $T_1 = A_1 \cup B_1$,
which is a two sided torus in $M$.

In the other case $A_1$ approaches $B_1$ from the right side and then
$\gamma, \gamma_1$ are freely homotopic to the inverses of each other
in $B_1$. In this case $A_1 \cup B_1$ is a one sided Klein bottle.
In this case continue along $F$ past $\gamma_1$ until it hits
$B_1$ again in a curve $\gamma_2$ (with orientation) 
$-$ with $A_2$ the annulus in $F$ bounded by $\gamma_1, \gamma_2$.
Let $B_2$ be the subannulus of $B_1$ bounded by $\gamma, \gamma_2$ and
let $B_3$ be the subannulus of $B_1$ bounded by $\gamma_2, \gamma_1$.
If $\gamma_1, \gamma_2$ are freely homotopic in $B_1$, let
$T_1 = A_2 \cup B_3$ which is an embedded, two sided torus in $M$.
Otherwise let $T_1 = A_1 \cup A_2 \cup B_2$ which is again an embedded
two sided torus in $M$.

In any case $T_1$ is a torus obtained from an annulus $A^*$ in $F$ and
a transverse annulus contained in $B_1$ foliated by circles.
Since the annulus $A^*$ has trivial holonomy, it has a small neighborhood
which is product foliated and we can perturb $T_1$ slightly to produce
an embedded torus  $T$ transverse to $\gal$ and foliated by
circles. It follows that $T$ is incompressible but we will not need that.

Cut $M$ along $T$ to produce a manifold $M_1$ with 2 boundary tori
$U_1, U_2$ and induced $2$-dimensional foliation $\gal_1$ transverse
to the boundary of $M_1$.
Since every leaf of $\gal$ intersects $B_1$, etc.. then every 
leaf of $\gal_1$ intersects $\partial M_1$. A leaf $E$ of $\gal_1$
intersects say $U_1$ in a closed curve $\alpha$ and moving from $\alpha$ in
$E$ it has to intersect $\partial M_1$ again. Since the leaves of
$\gal$ are annuli, then all leaves of $\gal_1$ are compact annuli.
Since $M_1$ is orientable and has two boundary components, it now
follows that $M_1$ is homeomorphic to $S^1 \times V$, where $V$
is a compact annulus. Now $M$ is obtained by glueing
$U_1$ to $U_2$ preserving a circle foliation.
Hence $M$ is a nilpotent $3$-manifold. It follows that $\pi_1(M)$ has
polynomial growth, again contradicting the fact that $\pi_1(M)$ has
exponential growth \cite{Pl-Th}.
So again we conclude that this cannot happen.

We conclude that in this case $\gal$ has to have a compact leaf
$C$, which is a fiber of a fibration of $M$ over $S^1$
and $\Phi$ is topologically conjugate to a suspension.
 The result is proved in this case.
If $M$ is non orientable then it is doubly covered by an orientable manifold
and the result applies to the double cover. Hence again $\gal$ has
a compact leaf $C$ which is a fiber and the result follows in
this case as well.
This finishes the proof the theorem.
\end{proof}

\section{General facts about $\rrrr$-covered foliations}

From now on we may assume that $\gal$ has only Gromov hyperbolic leaves.
A theorem of Candel \cite{Can} then shows that there is
a metric in $M$ so that leaves of $\gal$ are hyperbolic surfaces. We assume this
is the metric we are using.
The following facts concerning $\rrrr$-covered foliations
are proved in \cite{Fe2,Cal2}.
There are two possibilities for $\gal$:

\begin{itemize}

\item
$\gal$ is {\em uniform} $-$ Given any two leaves $L, E$ of $\gn$, then they are a finite Hausdorff distance
from each other. This was defined by
Thurston  \cite{Th2}. If $a$ is the Hausdorff distance between 
the leaves $L, E$ (which depends on the pair $L, E$),
then for any $x$ in $L$ choose
$f(x)$ in $E$ so that $d(x,f(x)) \leq a$.
This map $f$ is a quasi-isometry  between $L$ and $E$ and hence induces a homeomorphism 
between the corresponding circles at infinity
$f: \pin L \rightarrow \pin E$.
Note that $f$ in general may not even be continuous. However, given the $\rrrr$-covered hypothesis,
then $f$ is boundedly well defined: any two choices of $f(x)$ are a bounded distance
from each other. The bound depends on the pair of leaves.
Clearly these identifications between circles at infinity are group equivariant under
the action by $\pi_1(M)$.
In addition they satisfy a cocycle property: given $3$ leaves $L, E, S$ of $\gn$, then
the identifications between $\pin L$ and $\pin E$ composed with those between
$\pin E$ and $\pin S$, induce the expected identifications between $\pin L$ and
$\pin S$.
Hence all circles at infinity are identified to a single circle, which is called
the {\em universal circle}
of $\gal$ or $\gn$ and is denoted by $\uu$. 
By the equivariance property, $\pi_1(M)$ acts on $\uu$.
The fact to remember here is that given
$p$ in $\pin L$ and $q$ in $\pin E$, then $p, q$ are associated to the same point of $\uu$ 
if and only if a geodesic ray $r$ in $L$ defining $p$ is a finite Hausdorff distance
in $\mi$ from a geodesic $r'$ in $E$ defining $q$.

\item
$\gal$ is not uniform. If $\gal$ is not a minimal foliation, then it has up to countably
many foliated $I$-bundles. One can collapse the $I$-bundles to produce a foliation which
is minimal (notice this does not work in the uniform case, for instance when $\gal$ is
a fibration).
If a pseudo-Anosov flow is transverse to $\gal$, then one can do the blow down so that
the flow is still transverse to the blow down foliation \cite{Fe2}.
Sometimes we will assume in this case that $\gal$ is minimal.
If $\gal$ is minimal then the following important fact is proved in \cite{Fe2}: for any
$L, E$ leaves of $\gn$, then there is a dense set of contracting
directions between them. A {\em contracting direction} is given by a geodesic $r$ in
$L$ so that the distance between $r$ and $E$ converges to $0$ as one escapes
in $r$.
Notice this only depends on the ideal point of $r$ in $\pin L$ as all such rays
are asymptotic because $L$ is the hyperbolic plane.
Any such direction produces a {\em marker} $m$. This is an embedding

$$m: [0,\infty) \times [0,1]
\ \rightarrow \ \mi$$

\noindent
 so that for each $s$ in $[0,1]$  there is a leaf $F_s$ of $\gn$ so that 

$$m([0,\infty) \times \{ s \}) \ \subset \ F_s$$

\noindent
is a parametrized geodesic ray in $F_s$. In addition for each $t$ in $[0,+\infty)$ then
$m(\{ t \} \times I)$ is a transversal to 
$\gn$ and 

$$\forall s_1, s_2 \in I, \ \ \ \ d(m(t,s_1), m(t,s_2)) \rightarrow 0 \ \ \
{\rm as} \ \ t \rightarrow \infty.$$

\noindent
Hence these geodesics of $F_{s_1}, F_{s_2}$ are 
asymptotic in $\mi$.
The contracting directions between $L, E$ induce an identification between dense sets in
$\pin L, \pin E$ which preserves the circular ordering. This extends to a homemorphism
between
$\pin L$ and $\pin E$. These homeomorphisms are clearly $\pi_1(M)$ equivariant and
in addition they satisfy the cocycle property
as in the uniform case.
Hence as before each circle at infinity is canonically identified to a fixed circle $\uu$, the
universal circle of $\gal$ or $\gn$. Finally  $\pi_1(M)$ acts on $\uu$.
\end{itemize}

We now discuss what happens if $\gal$ is not uniform and not minimal.
This was not discussed in \cite{Fe2} but it is a simple consequence
of the analysis of the minimal case as follows:
Let ${\cal Z}$ be the unique minimal set of $\gal$ \cite{Fe2}.
Blow down $\gal$ to a minimal foliation $\gal'$.
The analysis above produces the universal circle $\uu'$ for $\gal'$.
Let $\delta: M \rightarrow M$ be the blow down map sending leaves of
$\gal$ to leaves of $\gal'$ and homotopic to the identity.
Lift the homotopy to produce a lift 
$\widetilde \delta$ of $\delta$, which is a homeomorphism of $\mi$.
For any $A, B$ leaves of $\gn'$, there are $F, E$ leaves in
$\widetilde {\cal Z}$ so that $A, B$ are between $F, E$.
Let $F' = \widetilde \delta (F)$,
$E' = \widetilde \delta (E)$.
Then in $\gn'$ there is a dense set of contracting directions
between $F'$ and $E'$.
For any such there is a ray $r'$ in $F'$ asymptotic to a ray $l'$ in 
$E'$.
Under the blow up map, this produces corresponding rays in $F, E$:
a ray $r$ in $F$ which is a bounded distance from a ray $l$ in $E$.
By the $\rrrr$-covered property, the ideal point of the ray $l$ is 
the unique direction for which there is a ray a bounded distance
from $r$ in $\mi$.
This provides an identification between dense sets in $\pin F$ and
$\pin E$. This is equivariant and satisfies the cocycle property.
This can be extended to a group equivariant homeomorphism between
$\pin F$ and $\pin E$.
This produces the universal circle in this case.

Calegari \cite{Cal1} produced many examples of $\rrrr$-covered, non uniform
foliations in closed, hyperbolic $3$-manifolds.

\section{Intersections between leaves of $\gn$ and pseudo-Anosov foliations}

Let $\Phi$ be a pseudo-Anosov flow in $M^3$ closed.
Background on pseudo-Anosov flows can be found in 
\cite{Mo1,Fe6}. 
Here we always assume that there are no $1$-prong singular orbits.
The universal cover of $M$ is denoted by $\mi$.
Let $\fs, \fu$  be the stable/unstable foliations of $\Phi$ and 
$\wwp, \fns, \fnu$ the lifts to the universal cover of $\Phi, \fs, \fu$
respectively. 
Given $z$ in $\mi$ let $\ws(z)$ be the stable leaf containing $z$
and similarly define $\wu(z)$. 
Our assumption is that $\Phi$ is transverse to the foliation $\gal$ and
is regulating for $\gal$.
Therefore given any leaf $L$ of $\gn$, the foliations $\fns, \fnu$ are
transverse to $L$ and they induce $1$-dimensional singular foliations
$\fnsL, \fnuL$ in $L$.
We are in the case that leaves of $\gn$ are isometric to the hyperbolic
plane.

One key fact to be used here is that
we proved in \cite{Fe6} that 
each ray of a leaf of $\fnsL$ or $\fnuL$ accumulates in a single point of $\pin L$.
This works even if $\gal$ is not $\rrrr$-covered.

A convention that will be used throughout the article 
is the following: the group $\pi_1(M)$ acts
on several objects: the universal cover $\mi$, the orbit space $\oo$,
the universal circle $\uu$, the foliations $\fns, \fnu, \oos, \oou$, etc..
If $g$ is an element of $\pi_1(M)$ we still use the same $g$ to denote
the induced actions on all these spaces $\mi, \oo, \uu, \fns, \fnu, \oos, \oou$, etc.. 

\begin{lemma}{}{}
Suppose that a pseudo-Anosov flow $\Phi$ is regulating for an $\rrrr$-covered
foliation $\gal$. Then the stable and unstable foliations $\fns, \fnu$ have
Hausdorff leaf space. Therefore for any leaf $L$ of $\gn$, the leaves
of the one dimensional foliations $\fnsL, \fnuL$ are uniform
quasigeodesics in $L$.
\label{quasi}
\end{lemma}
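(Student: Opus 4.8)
The plan is to transfer the whole question into the orbit space \oo\ of \wwp\ and exploit that, for a regulating flow, \oo\ is canonically identified with a leaf of \gn. First I would record the product picture: since $\Phi$ is transverse to \gal\ and regulating, every orbit of \wwp\ meets every leaf of \gn\ exactly once — transversality makes the projection of an orbit to the leaf space \rrrr\ of \gn\ a locally injective, hence strictly monotone, local homeomorphism, and regulating makes it onto. Hence for each leaf $L$ of \gn\ the map $\oo \to L$ sending an orbit to its unique intersection with $L$ is a homeomorphism carrying \oos, \oou\ onto \fnsL, \fnuL. In particular the leaf spaces of \fns, \fnu, of \oos, \oou, and of \fnsL, \fnuL\ all coincide, so it suffices to prove that \oos\ (equivalently \oou) has Hausdorff leaf space. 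I would also record the one fact about the \gn-leaf space that drives everything: the transverse flow co-orients \gn, so forward flow increases the \rrrr-coordinate consistently; and if $1 \ne g \in \pi_1(M)$ fixes an orbit $\widetilde\gamma$, then $g$ translates $\widetilde\gamma$, so $g$ moves every leaf of \gn\ and acts on the leaf space \rrrr\ as a fixed-point-free, orientation-preserving homeomorphism, whose direction of translation is read off from the flow direction of $g$ along $\widetilde\gamma$.

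Now suppose, for contradiction, that \oos\ has non-Hausdorff leaf space, i.e.\ there are two stable leaves non-separated from each other. Here I would invoke the structure theory of non-separated leaves for pseudo-Anosov flows from \cite{Fe6}: such a configuration produces a lozenge whose two corners are periodic orbits $p$ and $q$, both fixed by a single non-trivial $g \in \pi_1(M)$ leaving the lozenge invariant. The heart of the argument — and the step I expect to be the main obstacle — is to read off the flow direction of $g$ at the two corners. In local stable/unstable coordinates $g$ contracts the stable prong and expands the unstable prong at $p$, so $g$ runs forward along $\gamma_p$; but the two perfect fits of the lozenge swap the roles of stable and unstable between the corners, so at $q$ the same $g$ expands the stable prong, i.e.\ $g$ runs backward along $\gamma_q$. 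By the previous paragraph $g$ would then move the \rrrr-coordinate of \gn\ upward (reading via $\gamma_p$) and downward (reading via $\gamma_q$) at once. An orientation-preserving fixed-point-free homeomorphism of \rrrr\ has $g(t)-t$ of constant sign, so this is impossible. Hence there are no non-separated leaves and \fns, \fnu\ have Hausdorff leaf space. Getting the sign bookkeeping at the two corners exactly right (orientation conventions, and which half-leaves make the two perfect fits) is the delicate part; everything else is formal.

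For the final ``therefore'' I would deduce that the leaves of \fnsL\ are uniform quasigeodesics in $L \cong \hh$. With both \oos\ and \oou\ now having leaf space \rrrr, the two transverse foliations give \oo\ a genuine product structure away from the prong singularities, so each leaf of \fnsL\ separates $L$ and its two complementary regions each run off to infinity. Combined with the result of \cite{Fe6} that every ray of a leaf of \fnsL\ limits to a single point of \pin $L$, this forces the two rays of a leaf to limit to \emph{distinct} ideal points, so each leaf is a properly embedded line with two well-defined distinct endpoints on \pin $L = \uu$. Finally I would upgrade ``line with distinct endpoints'' to ``uniform quasigeodesic'': the foliation, the product coordinates and the hyperbolic metric on the leaves are all $\pi_1(M)$-equivariant and $M$ is closed, so the cocompact action bounds the deviation of a leaf from the geodesic joining its endpoints, with the bound independent of the leaf and of $L$ — which is exactly the uniform quasigeodesic estimate. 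The same argument applies verbatim to \fnuL.
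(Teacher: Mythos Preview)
Your argument for the Hausdorff leaf space is correct and is essentially the paper's own proof: the paper simply quotes from \cite{Fe6} the consequence that non-separated stable leaves force two closed orbits $\alpha,\beta$ which are freely homotopic to the inverses of each other, and then runs exactly your contradiction on the action on the leaf space $\hp\cong\rrrr$ of $\gn$. Your lozenge description is just the geometric picture behind that citation, and your ``sign bookkeeping'' worry is precisely the content of ``freely homotopic to the inverse''. So for the first half you and the paper agree.

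The second half is where there is a genuine gap. The paper does not argue this step at all; it invokes Proposition~6.11 of \cite{Fe6}, which is a nontrivial result whose proof is not the short cocompactness sketch you give. Two specific problems with your sketch:
\begin{itemize}
\item Your argument that the two rays of a leaf of $\fnsL$ have \emph{distinct} ideal points does not go through. A properly embedded line in $\hh$ both of whose ends limit to the same ideal point $p$ still separates $\hh$ into two unbounded components (one a horodisk-like region touching $\pin L$ only at $p$), so ``both complementary regions run off to infinity'' does not exclude coinciding endpoints. You need further input, e.g.\ the transverse unstable foliation, to rule this out.
\item Even granting distinct endpoints, ``properly embedded line with two ideal points'' does not imply ``bounded distance from the geodesic with those endpoints''; a line can make arbitrarily deep excursions toward other ideal points. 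Your cocompactness argument would have to take a sequence of leaves with deviation going to infinity, translate, and pass to a limit --- but then you must show the limit leaf still has distinct endpoints, which is exactly the unproved statement you are trying to establish. The actual proof in \cite{Fe6} uses the Hausdorff leaf space hypothesis in a more substantial way to control how stable leaves sit relative to unstable leaves.
\end{itemize}
So for the ``therefore'' clause you should either cite \cite{Fe6} as the paper does, or supply a real argument that uses both transverse foliations and the Hausdorff property, not just cocompactness.
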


\begin{proof}{}
This  is stronger than the fact that rays in these leaves limit to single
points in $\pin L$.
If we suppose on the contrary that (say) $\fns$ does not have Hausdorff
leaf space, 
then there are closed orbits
$\alpha, \beta$ of $\Phi$ (maybe with multiplicity), so that
they are freely homotopic to the inverse of each other, see \cite{Fe6}.
Lift them coherently to orbits $\widetilde \alpha, \widetilde \beta$
of $\wwp$.
Since $\Phi$ is regulating for $\gal$, then both $\widetilde \alpha$
and $\widetilde \beta$ intersect every leaf of $\gn$.

Let $g$ in $\pi_1(M)$ non trivial with $g$ leaving $\widetilde \alpha$ invariant
and sending points in $\widetilde \alpha$ forward (in terms of the flow
parameter).
Therefore $g$ acts in an increasing way in the leaf space of $\gn$.
By the free homotopy, $g$ also leaves $\widetilde \beta$ invariant
and $g$ acts decreasingly in $\widetilde \beta$, hence also in the leaf
space of $\gn$. This is a contradiction.

Hence the leaf spaces of $\fns, \fnu$ are Hausdorff. As proved in
proposition 6.11 of \cite{Fe6} this implies that for any $L$ in $\gn$,
then all leaves of
$\fnsL, \fnuL$ are uniform quasigeodesics  in $L$.
The bounds are independent of the leaf
of $\fnsL, \fnuL$ in $L$ and also of the leaf $L$ of $\gn$.
For non singular leaves, this implies that any such leaf is a bounded
distance (in the hyperbolic metric of $L$) from a minimal geodesic in $L$.
For singular $p$-prong leaves of $\fnsL, \fnuL$ the same is true
for any properly embedded copy of $\rrrr$ in such leaves.
\end{proof}

In this section we want to show that the asypmtotic behavior
of leaves of $\fnsL, \fnuL$ is coherent
with the identifications prescribed by the universal circle.

Let $\hp$ be the leaf space of $\gn$, which is homeomorphic to the set
of real numbers. Let $\pi: \mi \rightarrow M$ be the universal covering
map.

Let $r$ be ray of $\fnsL$ (or $\fnuL$) starting at a point $p$ in $L$.
In general this is {\underline {not}} a geodesic ray in $L$.
Let $E$ be any leaf of $\gn$. Since $\Phi$ is regulating, then
$\wwp_{\rrrr}(p)$ intersects $E$ and the same is true for any
point $q$ in $r$. The intersection of $\wwp_{\rrrr}(r)$ and
$E$ is a ray of $\fnsE$ $-$ again because of the regulating
condition.  This ray 
%is also quasigeodesic
%in $E$ and therefore 
also defines an unique ideal point in $\pin E$.
Since $\pin E$ is canonically identified with the universal circle
$\uu$ this defines a map 

$$f_r: \ \hp \ \ \rightarrow \ \ \uu$$

$$f_r(E) \ = \ \{ {\rm \ equivalence \ class \
in} \ \ \uu \ {\rm \ of \ the \ ideal \ point \ in} \ \ 
\pin E \ \ {\rm of \ the \ ray} \ \ 
(\wwp_{\rrrr}(r) \cap E) \ \}$$

\begin{proposition}{}{}
For any $L$ in $\gn$ and any $r$ ray of $\fnsL$ or $\fnuL$, then
$f_r: \hp \rightarrow \uu$ is a constant map.
\label{const}
\end{proposition}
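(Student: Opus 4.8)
The plan is to prove that $f_r$ is locally constant; since $\hp$ is homeomorphic to $\rrrr$ and hence connected, a locally constant map on $\hp$ is automatically constant, which is exactly the assertion. Fix a leaf $S$ of $\fns$ with $r \subset S \cap L$ (the case $r \subset \fnuL$, with a leaf of $\fnu$, is identical after reversing the flow direction). Then $\wwp_{\rrrr}(r)$ is a flow-saturated strip inside $S$, and for every leaf $E$ of $\gn$ the set $\wwp_{\rrrr}(r) \cap E$ is a ray of $\widetilde{\cal F}^s_E$. By Lemma \ref{quasi} each such ray is a uniform quasigeodesic in $E$, so it limits to a single point $\xi(E) \in \pin E$ and lies a bounded distance, in the hyperbolic metric of $E$, from the geodesic ray $\rho_E$ of $E$ with the same ideal point. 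Recall that $f_r(E)$ is the class of $\xi(E)$ in $\uu$.

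Fix $E_0$ and let $E$ range over a compact subinterval of $\hp$ containing $E_0$. The first step is to bound how far the ray moves. Because $\Phi$ is regulating and $M$ is compact, the flow time needed to pass from $E_0$ to $E$ is uniformly bounded over all points once $E$ is confined to such a subinterval; since every orbit contained in $S$ meets both $E_0$ and $E$, the flow carries $S \cap E_0$ to $S \cap E$ moving each point a uniformly bounded distance in $\mi$. Hence $S \cap E_0$ and $S \cap E$ are a bounded Hausdorff distance apart in $\mi$. As each ray is a bounded leaf-distance, and therefore a bounded $\mi$-distance, from its geodesic representative, the geodesics $\rho_{E_0}$ and $\rho_E$ are also a bounded Hausdorff distance apart in $\mi$.

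It remains to convert this bounded Hausdorff distance into the statement that $\xi(E_0)$ and $\xi(E)$ represent the same point of $\uu$. In the uniform case this is immediate from the description of $\uu$ recalled in Section 3: geodesic rays in distinct leaves of $\gn$ represent the same point of $\uu$ precisely when they are a finite Hausdorff distance apart in $\mi$. The non-uniform case is where the real work lies, and it requires the denseness of contracting directions. After blowing down foliated $I$-bundles we may assume $\gal$ is minimal, so between $E_0$ and $E$ there is a dense set of contracting directions, each producing a marker whose geodesic rays in $E_0$ and in $E$ are asymptotic in $\mi$ and whose ideal points correspond under the universal-circle identification $\phi : \pin E_0 \to \pin E$. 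I would trap $\xi(E_0)$ between two contracting directions $p_1, p_2 \in \pin E_0$ lying on either side of it in the circular order; the markers through $p_1$ and $p_2$ give geodesic rays in $E$ limiting to $\phi(p_1)$ and $\phi(p_2)$ and asymptotic to the corresponding rays in $E_0$. Combining this with the bounded Hausdorff distance between $\rho_{E_0}$ and $\rho_E$ shows that $\rho_E$ is caught between the two marker rays in $E$, so $\xi(E)$ lies between $\phi(p_1)$ and $\phi(p_2)$; letting $p_1, p_2 \to \xi(E_0)$ forces $\xi(E) = \phi(\xi(E_0))$, that is, $f_r(E) = f_r(E_0)$.

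The delicate point throughout, and where I expect the genuine difficulty, is the interplay between the intrinsic hyperbolic geometry of the leaves and the ambient geometry of $\mi$: a bound on Hausdorff distance in $\mi$ must be promoted to control of the ideal points in $\pin E$, and the separation argument sandwiching $\rho_E$ between the two marker rays must be carried out inside the leaf $E$ while the hypotheses only supply $\mi$-distances. This is precisely what the asymptotic ($\mbox{distance} \to 0$) property of markers, the density of contracting directions, and the uniform quasigeodesic bound of Lemma \ref{quasi} are there to supply. I would also check that the trapping respects the circular order used to define $\phi$, and note that the dependence of the flow-time bound on the chosen subinterval of $\hp$ is harmless, since only local constancy is needed to conclude.
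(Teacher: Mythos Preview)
Your uniform case is fine and matches the paper's argument. The gap is in the non-uniform case, and it lies exactly where you try to use the sentence ``Because $\Phi$ is regulating and $M$ is compact, the flow time needed to pass from $E_0$ to $E$ is uniformly bounded over all points once $E$ is confined to such a subinterval.'' This is precisely the content of the Claim the paper proves in Case~1, and the proof there \emph{uses} the uniform hypothesis: one needs that $g_i(E)$ stays a bounded distance from $g_i(p_i)$ in order to extract a convergent leaf. In the non-uniform situation two leaves of $\gn$, however close in $\hp$, are typically at infinite Hausdorff distance in $\mi$; if the flow time between them were bounded, they would be a bounded Hausdorff distance apart, contradicting non-uniformity. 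So the bounded Hausdorff distance between $\rho_{E_0}$ and $\rho_E$ that you later invoke is simply unavailable, and the sandwiching of $\rho_E$ between the two marker rays has no foundation.

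The paper's non-uniform argument has the same trapping intuition but a different mechanism. Instead of bounding how far the ray moves, it argues by contradiction: if $\xi(E_0)$ and $\xi(E)$ differed in $\uu$, choose contracting directions $c,d$ separating them, and build from the two markers together with a compact rectangle $A$ of leafwise geodesic arcs a ``wall'' $A \cup B_1 \cup B_2$ separating the slab between $E_0$ and $E$. Flow lines from points of $r$ to the corresponding points of $r'$ must cross this wall; they cannot cross the marker pieces $B_i$ (those consist of arbitrarily short transversals), so they cross the compact piece $A$. Taking points escaping in $r$, the crossing points accumulate in $A$, forcing bounded flow time to $E_0$ for a sequence whose flow time actually goes to infinity --- a contradiction. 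In other words, the paper exploits the very failure of your bounded-flow-time claim as the source of the contradiction. Your marker-sandwich idea can be made to work, but the separation has to be enforced topologically via this wall, not metrically via a Hausdorff bound you do not have.
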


\begin{proof}{}
The proof depends on whether $\gal$ is uniform or not.

\vskip .1in
\noindent
{\underline {Case 1}} $-$ $\gal$ is uniform.

\vskip .1in
\noindent
{\underline {Claim}} $-$ If $\gal$ is uniform and $\Phi$ is transverse and regulating for $\gal$, 
then for any $S, E$ leaves of $\gn$, there is
a bound on the length of flow lines from $S$ to $E$.
The bound depends on the pair $S, E$.

Otherwise we find $p_i$ in $S$ with $\wwp_{t_i}(p_i)$ in $E$ and $t_i$ converging to
(say) infinity.
Up to subsequence assume that $\pi(p_i)$ converges to a point $p$ in $M$.
Take covering translations $g_i$ in $\pi_1(M)$ with $g_i(p_i)$ converging
to $p_0$. For each $i$ take $q_i$ in $g_i(E)$ with $d(q_i, g_i(p_i)) < a$ for
fixed $a$. This uses the uniform property. Up to subsequence assume that
$q_i$ converges and hence $g_i(E)$ converges to a leaf $E_0$.
The orbit of $\wwp$ through $p_0$ intersects $E_0$, since the flow is regulating.
Hence there is $t_0$ with $\wwp_{t_0}(p_0)$ in $E_0$.
By continuity of flow lines of $\wwp$, then for any $z$ in $\mi$ near $p_0$ 
and $G$ leaf of $\gn$ near $E_0$, then there
is $t$ near $t_0$ so that $\wwp_t(z)$ is in $G$. But
$\wwp_{t_i}(g_i(p_i))$ is in $g_i(E)$, which is a leaf near $E_0$ and
$t_i$ converges to infinity, contradiction.
This proves the claim.
Notice that it  is not necessary for $\Phi$ to be pseudo-Anosov in this claim,
just that it is regulating.

Let $l$ be the geodesic ray in $L$ with starting point $p$ and a finite Hausdorff
distance (in $L$) from $r$.
By the above $\wwp_{\rrrr}(r)$ intersects $E$ in a ray $r'$ of $\fnsE$ which is
a bounded distance from $r$ in $\mi$.
The ray $r'$ is also a uniform quasigeodesic ray in $E$, hence
 $r'$ is a bounded distance in $E$ from a geodesic ray $l'$.
Then $l, l'$ are a finite distance from each other in $\mi$.
%from the geodesic ray in $E$ which defines the point identified to
%the ideal point of $r$.
The definition of the universal circle in the uniform case implies that
$r, r'$ define the same point in $\uu$. This establishes this case.

\vskip .2in
\noindent
{\underline {Case 2}} $-$ $\gal$ is not uniform.

In this case, first assume that $\gal$ is minimal. Therefore between any two leaves of
$\gn$, there is a dense set of contracting directions.
The proof will be done by contradiction.
Let $r$ be a ray of a leaf of $\fnsL$ for some $L$ in $\gn$ with initial
point $p$. Let $a$ be the ideal point of $r$ in $\pin L$. Suppose that
for some $E$ leaf of $\gn$, then 

$$r' \ = \ \wwp_{\rrrr}(r) \cap E \ \ \ {\rm defines \
a \ distinct \ point \ in} \ \ \  \uu$$

\noindent
Let $b$ be the point in $\pin L$ identified to the ideal point of $r'$ in $\pin E$, by
the universal circle identification. Hence $a, b$ are different.
By density of contracting directions between $L$ and $E$, there are
points $c, d$ in $\pin L$ which separate $a$ from $b$ in $\pin L$ and so
that $c, d$ correspond to contracting directions between $L$ and $E$.
Let $m_1, m_2$ be markers between $L$ and $E$ associated to the
contracting directions $c, d$ respectively.
Let $B_i = Image(m_i)$ and let $C$ be the union of the points in $\mi$ contained
in leaves intersecting the markers $m_1, m_2$. 
Removing initial pieces if necessary we may assume that $B_1, B_2$ are disjoint.
Since  $m_i( \{ t \} \times I)$ is a very small transverse arc if $t$ is big enough,
we can also assume the following: if $z$ is in $B_1$ or $B_2$ then 
$\wwp_{\rrrr}(z)$ will intersect any leaf $S$ in $C$ near $z$, producing
a small transversal from $L$ to $E$ passing through $z$.
For each leaf $S$ of $\gn$ intersected by the markers, let 

$$r_S \ \ = \ \ {\rm geodesic \ arc \ in} \ \ S \ \ {\rm joining \ the \ endpoints \ of}
\ \ Image(m_1) \cap S \ \ {\rm and} \ \  Image(m_2) \cap S.$$

\noindent
Let $A$ be the union of the $r_S$ for such $S$. This is topologically
a rectangle with the bottom in $L$ the top in $E$ and the sides transversals
from $L$ to $E$.
Then $A \cup B_1 \cup B_2$ separates $C$
into $2$ components $C_1, C_2$.
Since $\{ a, b \}$ is disjoint from $\{ c, d \}$ the ray $r$ does not accumulate
on $c$ or $d$ in $\pin L$. Hence
starting with a smaller ray $r$ if necessary we may assume also that $r, r'$ are
disjoint from $B_i$ and far away from it.
In particular the flow line through any point of $r$ will not intersect
$B_i$, since points in $B_i$ are in very short transversals from $L$ to $E$.

By renaming $C_1, C_2$ we may assume that $r$ is in $C_1$ and $r'$ is in $C_2$.
For each $z$ in $r$ it is in $C_1$,
then the flow line through $z$ intersects $E$ in $r'$ which is in $C_2$.
Therefore this flow line has to intersect $A \cup B_1 \cup B_2$.
The above remarks imply that this flow line cannot intersect either
$B_1$ or $B_2$.
Hence this flow line must intersect $A$.
Since $A$ is compact we can choose 
$z_i$ in $r$ escaping in $r$ so that $\wwp_{\rrrr}(z_i)$ intersects
$A$ in 

$$q_i \ = \ \wwp_{t_i}(z_i) \ \ \ {\rm and} \ \ q_i \rightarrow q \in A$$

\noindent
Since $z_i$ escapes in $r$, it follows that $t_i$ converges to infinity.
By the regulating property of $\Phi$, the orbit through $q$ intersects $L$.
Hence nearby
orbits intersect $L$  in bounded time, contradicting that $t_i$ converges to infinity.

This contradiction shows that $r'$ has to define the same point in $\uu$ that $r$ does.
This finishes the proof when $\gal$ is minimal.

If $\gal$ is not minimal, then first blow down $\gal$ to a minimal foliation $\gal'$.
We can assume that $\Phi$ is still transverse to $\gal'$.
Now use the proof for $\gal'$ as above.
The walls $A \cup B_1 \cup B_2$ for $\gn'$ pull  back to walls for $\gn$.
Because the foliation $\gal$ is a blow up of $\gal'$ and $\Phi$ is 
transverse to both of them, it follows that
flowlines of $\wwp$ cannot cross the two ends of the pullback walls
and if necessary
can only cross the compact part of these walls.
Therefore the same arguments as above prove the result in this case.
This finishes the proof of the proposition.
\end{proof}

For a leaf $F$ of $\gn$ we consider $F \cup \pin F$ as the canonical compactification
of $F$ as a hyperbolic plane.
Given any two leaves $F, E$ in $\gn$, then using the universal circle analysis
there is a homeomorphism between $\pin F$ and $\pin E$.
In addition if a flow $\Phi$ is regulating for $\gal$ then
there is also a homeomorphism between $F, E$ by moving along flow lines.
We next show that these are compatible:

\begin{proposition}{}{}
Given $F, E$ in $\gn$ consider the map $g$ from $F \cup \pin F$ to
$E \cup \pin E$ defined by: if $x$ is in $F$ then move along the
flow line of $\wwp$ through $x$ until it hits $E$. The intersection
point is $g(x)$.
If $x$ is in $\pin F$, let $g(x)$ be the point in $\pin E$ associated
to $x$ by the universal circle identification.
Then $g$ is a homeomorphism.
In addition these homeomorphisms are  group equivariant and satisfy the
cocycle condition.
\end{proposition}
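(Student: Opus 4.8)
The plan is to verify that the map $g: F \cup \pin F \to E \cup \pin E$ is a continuous bijection with continuous inverse, treating the interior and the boundary separately and then checking compatibility at the boundary. The interior part is essentially free: since $\Phi$ is regulating, the flow map $x \mapsto \wwp_{\rrrr}(x) \cap E$ is a well-defined homeomorphism from $F$ to $E$ (every orbit through a point of $F$ meets $E$ in exactly one point, and this depends continuously on $x$ by continuity of the flow and transversality to $\gn$). The boundary part is also immediate: the universal circle identification $\pin F \to \pin E$ is a homeomorphism by the construction recalled in Section 3. So the only real content is showing that these two homeomorphisms glue continuously, i.e. that $g$ is continuous at points of $\pin F$ and that $g^{-1}$ is continuous at points of $\pin E$.

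First I would reduce the continuity-at-the-boundary statement to Proposition \ref{const}. Fix $a \in \pin F$ and a sequence $x_n \in F$ with $x_n \to a$ in the compactification $F \cup \pin F$; I must show $g(x_n) \to g(a)$ in $E \cup \pin E$. The points $g(x_n)$ lie in the interior $E$ and escape to infinity (since the flow map is proper on the interiors), so it suffices to show every convergent subsequence of $g(x_n)$ limits to the ideal point $g(a) \in \pin E$. To control the ideal limit, I would connect $a$ to the ray structure: choose a stable or unstable ray $r$ of $\fnsF$ (or $\fnuF$) in $F$ whose ideal point in $\pin F$ is $a$ — such a ray exists since the singular foliations $\fnsF, \fnuF$ fill out the ideal circle, and each ray limits to a single ideal point by the result of \cite{Fe6} recalled before Lemma \ref{quasi}. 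Proposition \ref{const} tells us precisely that the flowed-over ray $\wwp_{\rrrr}(r) \cap E$ has ideal point in $\pin E$ equal (via the universal circle) to $a$, which is exactly $g(a)$.

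The main obstacle, and the step deserving the most care, is passing from the single ray $r$ to an arbitrary sequence $x_n \to a$: I need that points of $F$ approaching the ideal point $a$ are flowed to points of $E$ approaching the matching ideal point, not merely points along one distinguished ray. Here I would use the quasigeodesic control from Lemma \ref{quasi}: the leaves of $\fnsF, \fnuF$ are uniform quasigeodesics, so a whole neighborhood of $a$ in the ideal circle is captured by a bounded family of such rays, all of which flow over to rays with the correspondingly close ideal points by Proposition \ref{const} and its equivariant, cocycle-respecting nature. Combined with the fact that the universal circle identification $\pin F \to \pin E$ is an orientation-preserving homeomorphism, this pins down the ideal limit of $g(x_n)$ to be $g(a)$, giving continuity of $g$ at the boundary; continuity of $g^{-1}$ follows by the symmetric argument using $E$ and the inverse flow. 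A continuous bijection of the compact disk $F \cup \pin F$ onto $E \cup \pin E$ with continuous inverse is a homeomorphism.

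Finally, the equivariance and cocycle properties require no new geometric input: for $\alpha \in \pi_1(M)$, the flow-along-orbits map commutes with the $\pi_1(M)$-action on $\mi$ (orbits are carried to orbits, leaves of $\gn$ to leaves of $\gn$), and the universal circle identifications are equivariant and satisfy the cocycle condition by their construction in Section 3. Thus $g$ for the pair $(\alpha F, \alpha E)$ is conjugate by $\alpha$ to $g$ for $(F,E)$, and the cocycle condition for triples $F, E, S$ holds separately on the interior (by composing flow maps, using that flowing $F$ to $E$ then $E$ to $S$ agrees with flowing $F$ to $S$) and on the boundary (by the universal circle cocycle property), hence on the whole compactification. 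This completes the verification.
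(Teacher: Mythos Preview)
Your overall structure is right: $g$ is a bijection, continuous on the interior and on the boundary separately, and the only content is continuity at boundary points. Your reduction to Proposition~\ref{const} along a single ray is also sound. The gap is exactly where you flag it as the ``main obstacle'': you never actually give an argument that an \emph{arbitrary} sequence $x_n \to a$ in $F$ has $g(x_n) \to g(a)$ in $E \cup \pin E$. The phrase ``a whole neighborhood of $a$ in the ideal circle is captured by a bounded family of such rays'' is not an argument; to make it one you would need to know that stable and unstable leaves provide a neighborhood basis of each point of $\pin F$ inside $F \cup \pin F$. That in turn requires knowing that distinct leaves do not share ideal points (Lemma~\ref{brick}) and that the associated geodesic laminations have only finite-polygon complementary regions. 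Those facts come \emph{after} this proposition in the paper and their proofs depend on it (the topology on $\oo \cup \uu$ and the $\pi_1(M)$-action on it are set up immediately after, using this proposition), so invoking them here is circular. Your assertion that for every $a \in \pin F$ there exists a stable or unstable ray with ideal point exactly $a$ is likewise unproved at this stage.

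The paper's proof avoids this by not using Proposition~\ref{const} as a black box but by re-running its \emph{mechanism}, splitting into the uniform and non-uniform cases. In the uniform case, flow times between $F$ and $E$ are bounded (the claim inside the proof of Proposition~\ref{const}), so $g|_F$ is a quasi-isometry and therefore extends continuously to the ideal boundary; one then checks that this extension agrees with the universal-circle identification. In the non-uniform minimal case one argues by contradiction: if $x_n \to a$ in $F$ but $g(x_n) \to g(b)$ with $b \ne a$, choose contracting directions $c,d \in \pin F$ separating $a$ from $b$ and build the wall $A \cup B_1 \cup B_2$ exactly as before; the flow segments from $x_n$ to $g(x_n)$ must then cross the compact piece $A$, contradicting that $x_n$ escapes in $F$. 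This uses only the contracting-direction structure from Section~3, which is available prior to any analysis of stable/unstable ideal points.
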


\begin{proof}{}
The map $g$ is a bijection. We only need to show that it is continuous,
since the inverse is a map of the same type. The equivariance and cocycle properties
follow immediately from the same properties for flowlines and identifications
induced by the universal circle.

We now prove continuity of $g$:
This is very similar to the previous proposition and we will use the setup of
that proposition.
The first possibility is that $\gal$ is uniform.
Then as seen in the previous proposition the map $g: F \rightarrow E$ is a
quasi-isometry and it induces a homeomorphism $g^*$ from $F \cup \pin F$
to $E \cup \pin E$. The image of an ideal point $p$ in $\pin F$ is determined
by the ideal point of $g(r)$ where $r$ is a geodesic ray in $F$ with ideal
point $p$. But $g(r)$ is a bounded distance from $r$ in $\mi$ and
this is exactly the identification associated to the
universal circle.

Suppose now that $\gal$ is not uniform.
Suppose first that $\gal$ is minimal. 
We know that $g$ restricted to both $F$ and $\pin F$ are homeomorphisms.
Since $F$ is open in $F \cup \pin F$ all we need to do is to show that $g$
is continuous in $\pin F$.
Let $a$ in $\pin F$ and $(a_i)$ converging to $a$ in $F \cup \pin F$, so 
we may assume that $a_i$ is in $F$.
Suppose by way of contradiction that $g(a_i)$ converges to $g(b)$ where
$b$ is not $a$.
Choose $c, d$ in $\pin F$ which separate $a, b$ in $\pin F$. Then construct
the wall $A \cup B_1 \cup B_2$ as in the previous proposition.
The flow lines from $a_i$ to $g(a_i)$ have to intersect this wall in
a compact set, contradiction as in the previous proposition.
This finishes the proof if $\gal$ is minimal.

If $\gal$ is not minimal, then use the same arguments as in the end of
the previous proposition to deal with this case.
\end{proof}

%Notice that $g$ restricted to both $F$
%is a homeomorphism to $E$ and likewise for the restriction to $\pin F$.
%Since $F$ is open in $F \cup \pin F$, it follows that $g$ is continuous
%in $F$.
%Let then $p$ in $\pin F$ and $p_i$ converging to $p$ in $F \cup \pin F$.
%As $g$ restricted to $\pin F$ is also a homeomorphism, we may assume 
%that $p_i$ are in $F$.
%
%Let $\{ r_n \ | \ n \in {\bf N} \}$ be a sequence of leaves (say) in
%$\fnsF$ which defines a neighborhood system of $p$ in $F \cup \pin F$.
%For each $n$, $r_n$ is contained in leaf $R_n$ of $\fns$.
%For simplicity assume that $r_n$ are all non singular.
%Given $n$, then for $i$ big enough, the $p_i$ is the component of
%$F - r_n$ which limits on $p$ in $F \cup \pin F$.
%Let $\partial r_n = \{ a_n, b_n \}$ which are points in $\pin F$ distinct
%from $p$.
%Flow $r_n$ to $r'_n$ in $E$, that  is $r'_n = R_n \cap E$.
%Let $\partial r'_n =  \{ a'_n, b'_n \}$ two points in  $\pin E$.
%By lemma \ref{const} it follows that
%$a'_n = g(a_n), \ b'_n = g(b_n)$. In addition $g(p_n)$ is in the component
%of $E - r'_n$ which limits on $g(p)$.
%
%
%As the $(r_n)$ define a neighborhood system of $p$
%in $F \cup \pin F$, then the sequences $(a_n), (b_n)$ in $\pin F$ converge
%to $p$.
%As $g$ restricted to $\pin F$ is a homeomorphism then 
%%the sequences $(a'_n), (b'_n)$ converge to $g(p)$. Since $r'_n$ are
%uniform quasigeodesics in $E$, with these ideal points, then 
%$r'_n$ converge to $g(p)$ in $E \cup \pin E$. This now implies
%that $g(p_n)$ converges to $g(p)$ and $g$ is continuous at
%$p$.  This finishes the proof of the proposition.
%\end{proof}

This proposition allows us to put a topology in $\oo \cup \uu$
as follows: Consider any leaf $L$ of $\gn$. There are homeomorphisms
between $L$ and $\oo$ and $\pin L$ and $\uu$.
The combined map induces a topology in $\oo \cup \uu$.
The previous proposition shows that this topology is
independent of the leaf $L$ we start with.
In addition
covering translations induce homeomorphisms of $\oo \cup \uu$ $-$ this is 
because if $L$ is in $\gn$ and $g$ in $\pi_1(M)$ then $g$ is 
a homeomorphism from $L \cup \pin L$ to \ $(g(L) \cup \pin g(L))$, \
both of which are homeomorphic to $\oo \cup \uu$.
We think of this as an action on $\oo \cup \uu$.
Given $g$ in $\pi_1(M)$, then the notation $g$ will also 
denote the induced map in $\oo \cup \uu$.
% and $L \cup \pin L$.
%(We have not defined a 
%The topology in the union $\oo \cup \uu$ is defined to be the topology
%induced from the identification with $L \cup \pin L$.
The analysis above makes it clear that $g$ in $\pi_1(M)$ acts
as an orientation preserving way on $\oo$ if and only if it acts
as an orientation preserving way on $\uu$.

\section{Action of elements of $\pi_1(M)$}

The main purpose of this section is to analyse 
how elements of $\pi_1(M)$ act on $\uu$.
We first need a couple of auxiliary results.
Here is some notation/terminology which will be used in the sequel.
Let $\oo$ be the orbit space of the lifted flow $\wwp$. The space $\oo$ is always homeomorphic
to the plane \cite{Fe-Mo}. 
The foliations $\fns, \fnu$ induce $1$-dimensional, possibly singular
foliations $\oos, \oou$ in $\oo$.
The only possible singularities are of $p$-prong type.
A point $x$ in $\oo$ is called {\em periodic} if there is $g \not = id$ in $\pi_1(M)$ with
$g(x) = x$. Let

$$\Theta: \ \mi \ \rightarrow \ \oo \ \ \ \ {\rm be \ the \ projection \ map}$$

\noindent
An orbit $\alpha$ of $\wwp$ is periodic if $\Theta(\alpha)$ is periodic.
A {\em line leaf} of $\fnsL$
is a properly embedded copy of $\rrrr$ in a leaf of $\fnsL$ 
of a leaf $L$ of $\gn$ so that: if $l$ is in a singular leaf $r$ of $\fnsL$, then $r - l$ does not
have prongs of $r - l$ on both sides of $l$ in $L$. A singular leaf with a p-prong singularity
has $p$ lines leaves. 
Consecutive line leaves intersect in a ray of $\fnsL$.
Non singular leaves are line leaves themselves.
Similarly one defines line leaves for $\fnuL, \oos, \oou, \fns, \fnu$.
Given $z$ in $\mi$ let $\ws(z)$ be the stable leaf containing $z$.
The {\em sectors} of $\ws(z)$ are the connected components of $\mi - \ws(z)$.

\begin{lemma}{}{}
Let $\Phi$ be regulating for $\gal$ which is $\rrrr$-covered with
hyperbolic leaves.
Let $l_i$ be line leaves of $\fnsLi$ where $L_i$ are leaves of $\gn$.
Suppose that there are $p_i$ in $l_i$ so that $p_i$ converges in $\mi$ to
a point $p$  in a leaf $L$ of $\gn$.
If $\ws(p)$ is singular assume that all $p_i$ are in the closure of a sector
of $\ws(p)$.
Then there is a line leaf $l$ of $\fnsL$ with $p$ in $l$ and $l_i$ converging
to $l$ in the geometric topology of $\mi$.
In addition if
$s_i$ are the geodesics in $L_i$ a bounded distance from $l_i$ in $L_i$ 
and $s$ is the geodesic a bounded distance from $l$ in $L$ then
$s_i$ converges to $s$ in the geometric topology of $\mi$.
\label{break}
\end{lemma}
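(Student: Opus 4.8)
The plan is to transfer the problem to the orbit space $\oo$, where the convergence becomes a statement about the singular foliation $\oos$ of the plane, and then to lift the conclusion back to $\mi$ using the regulating hypothesis. Since $\Phi$ is regulating for $\gal$, each flow line of $\wwp$ meets each leaf of $\gn$ in exactly one point, so for every leaf $L$ the projection $\Theta|_L : L \to \oo$ is a homeomorphism carrying $\fnsL$ to $\oos$ and line leaves to line leaves (this is the same identification used in the preceding propositions). First I would set $\theta_i = \Theta(p_i)$ and let $\lambda_i = \Theta(l_i)$ be the corresponding line leaf of $\oos$ through $\theta_i$; by continuity of $\Theta$ we have $\theta_i \to \theta := \Theta(p)$. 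The sectors of $\ws(p)$ correspond under $\Theta$ to the complementary sectors of the leaf $\oos(\theta)$, so the hypothesis on sectors translates into: if $\theta$ is a $p$-prong of $\oos$, then all $\theta_i$ lie in the closure of one fixed sector of $\oos(\theta)$.

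The first main step is the convergence $\lambda_i \to \lambda$, where $\lambda$ is the line leaf of $\oos$ through $\theta$ bounding, on the side of the $\theta_i$, the chosen sector. For non-singular $\theta$ this is just continuity of $\oos$ and its local product structure near $\theta$; for singular $\theta$ it follows from the standard local model of a $p$-prong of $\oos$ in the plane, where the sector hypothesis is exactly what singles out the correct line leaf of the singular leaf $\oos(\theta)$. I would then pull the convergence back to $\mi$: writing $l = (\Theta|_L)^{-1}(\lambda)$, the inverse maps $(\Theta|_{L_i})^{-1} : \oo \to L_i$ converge to $(\Theta|_L)^{-1}$ uniformly on compact sets as $L_i \to L$ (this is continuity of the flow together with the regulating property), so $l_i \to l$ in the geometric topology of $\mi$, with $p \in l$ and $l$ a line leaf of $\fnsL$.

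For the additional statement about geodesics, I would invoke Lemma \ref{quasi}: the line leaves $l_i$ are uniform quasigeodesics, so each $l_i$ lies within a Hausdorff distance $k$ (in the hyperbolic metric of $L_i$) of its geodesic $s_i$, with $k$ independent of $i$, and likewise $l$ lies within $k$ of $s$. Since $p_i \in l_i$ and $p_i \to p$, the geodesics $s_i$ pass within roughly $k$ of $p$; parametrizing each $s_i$ by arclength from its nearest point to $p_i$ and using that the leafwise hyperbolic metrics of $L_i$ converge to that of $L$ (the metric of Candel \cite{Can} varies continuously), an Arzel\`a--Ascoli argument yields a subsequential limit that is a complete geodesic of $L$ lying within $k$ of $l$. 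The geodesic a bounded Hausdorff distance from the quasigeodesic $l$ is unique, so this limit must be $s$; since every subsequence has a further subsequence converging to $s$, the full sequence $s_i$ converges to $s$. Alternatively one may argue via ideal endpoints: $l_i$ and $s_i$ share the same pair of points in $\pin L_i$ by the single-point limit result of \cite{Fe6}, and these endpoints converge as $l_i \to l$.

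The main obstacle is the first main step, the geometric convergence to a single line leaf in the singular case. Stable leaves of a pseudo-Anosov flow do not vary continuously through a singular orbit in the naive sense: approaching a $p$-prong from different sectors produces limits along different prongs of the singular leaf. The sector hypothesis is precisely what pins down one line leaf of $\oos(\theta)$, while the uniform quasigeodesic bounds from Lemma \ref{quasi} are what prevent the limit from degenerating to a bounded set or to a proper sub-ray. Together they guarantee that the limit is a genuine bi-infinite line leaf through $p$ rather than a partial or branched object, which is also what makes the geodesic convergence in the final clause go through.
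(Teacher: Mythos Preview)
Your proposal is correct. For the first statement your route through the orbit space $\oo$ is essentially the paper's argument in different clothing: the paper flows the $l_i$ along $\wwp$ into the fixed leaf $L$ (obtaining line leaves $r_i$ of $\fnsL$) and proves $r_i \to l$ there, whereas you project via $\Theta$ to $\oo$ and prove $\lambda_i \to \lambda$; since $\Theta|_L : L \to \oo$ is a homeomorphism carrying $\fnsL$ to $\oos$, these are the same reduction. The paper is more explicit about the second half of geometric convergence (that any accumulation point of the $l_i$ lies in $l$), invoking directly the Hausdorffness of the leaf spaces of both $\gn$ and $\fns$ from Lemma~\ref{quasi}; in your write-up this is hidden inside the phrase ``continuity of $\oos$'' and should be made explicit.

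For the second statement the two arguments genuinely diverge. The paper gives a bare-hands hyperbolic estimate: for $z \in s$ it chooses a long segment of $l$ centred near $z$, transfers it to a nearby segment of $l_i$ (using the first part), compares the corresponding geodesic chords in $L_i$ and $L$, and uses that long geodesic segments with boundedly close endpoints have $\epsilon$-close middle thirds. Your Arzel\`a--Ascoli argument is more conceptual and avoids these explicit computations, at the price of invoking the continuous variation of the leafwise hyperbolic metrics (legitimate, since Candel's metric is continuous on $M$) together with uniqueness of the geodesic within bounded Hausdorff distance of a quasigeodesic. Both proofs rest on the uniform quasigeodesic constant of Lemma~\ref{quasi}; yours uses it once to set up compactness and identify the limit, while the paper feeds it repeatedly into the $\epsilon$--$f(\epsilon)$ estimates.
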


\begin{proof}{}
We first prove the statement about $l_i$ and $l$.
Geometric convergence means that if $z$ is in $l$ then there are $z_i$ in $l_i$
with the sequence $(z_i)$ converging to $z$ and in addition if $z_{i_k}$ is in $l_{i_k}$
and $(z_{i_k})$ converges to $w$ in $\mi$ then $w$ is in $l$.

Since the flow $\Phi$ is regulating for
$\gal$, then $l_i$ flows into line leaves 
$r_i$ of $\fnsL$.
The points $p_i$ flow to $q_i$ in $L$ and clearly $q_i$ converges to $p$.
Hence there is a line leaf $l$ of $\fnsL$ through $p$, so that any point 
$z$ in $l$ is the limit of a sequence $(z'_i)$ with $z'_i$ in $r_i$.
If $\ws(p)$ is singular, this uses the fact that the $p_i$ are all in
the closure of a sector of  $\ws(p)$. Otherwise it could easily be
that different subsequences of $r_i$ converge to distinct line
leaves of $\fnsL$.
%The piece of of $r_i$ from $q_i$ to $z_i$ is very close to $l$ and
%hence there are
%$z_i$ in $l_i$ with $(z_i)$ converging to $z$. 
Let $z_i$ in the $z'_i$ orbit with $z_i$ in $l_i$.
Then $z_i$ converges to $z$. 
This shows that any
$z$ in $l$ is the limit of a sequence in $l_i$.

Now suppose that $(z_{i_k})$ is a sequence converging to $z$ with
$z_{i_k}$ in $L_{i_k}$.
Here $p_{i_k}$ is in $L_{i_k}$ and $p$ is in $L$ and hence $L_{i_k}$ converges
to $L$ in the leaf space $\hp$ of $\gn$. Since $\hp$ is Hausdorff then 
no sequence of points in $L_{i_k}$ converges to a point in another leaf of $\gn$.
If follows that $z$ is in $L$.
Let

$$V_k \ = \ \ws(p_{i_k}), \ \ \ \ \ \ V \ = \ \ws(p)$$

\noindent
Then $V_k$ converges to $V$. 
By lemma \ref{quasi} the leaf space of $\fns$ is also Hausdorff.
It follows that $z$ is in $V$. Hence $z$ is in $L \cap V = \tau$. 
It was also proved in \cite{Fe6} that $L \cap V$ is connected
and hence $\tau$ is exactly the leaf of $\fnsL$ containing $p$.

If $\tau$ is non singular this finishes the proof of the first statement.
Suppose then that $\ws(p)$ is singular.
Since the $p_i$ are in the closure of a sector of $\ws(p)$ then so are the
$l_{i_k}$ and hence the
$z_{i_k}$. Consequently the same is true of $z$.
The boundary of this sector is a line leaf of $\ws(p)$ and so $z$ is in the
corresponding line leaf of $\fnsL$, which is $l$.
This finishes the proof of the first statement of the lemma.

\vskip .1in
We now consider the second part of the lemma.
By lemma \ref{quasi} 
the leaves of $\fnsE$ are uniform quasigeodesics in $E$ for any $E$ leaf of $\gn$.
Let then $b > 0$ so that any line leaf of $\fnsE$ is $\leq b$ from the corresponding
geodesic in $E$ and likewise for arcs in such leaves.
Let $l_i$ be line leaves of $\fnsLi$, $l$ its limit in a leaf $L$ of $\gn$
as in the first part of the lemma. 
Let $s_i$ be 
the geodesics in $L_i$ corresponding to $l_i$ and let $s$ the geodesic in $L$
corresponding to $l$.

For any $\epsilon > 0$ there is fixed $f(\epsilon) > 0$ so that if two geodesic segments in
the hyperbolic plane have length bigger than \ $3 f(\epsilon)$ \ and corresponding endpoints
are less than $2b + 2$ from each other, then except for segments of length
$f(\epsilon)$ adjacent to the endpoints, then the rest of the segments are 
less than $\epsilon/3$ from each other.

Let then $z$ in $s$.  Given $\epsilon > 0$, find $w', u'$ in $s$ which are 
$(3f(\epsilon) + 2b + 1)$ distant
from $z$.
There are $w, u$ in $l$ with 

$$d_L(w,w') \ < \ b + \frac{1}{2}, \ \ \ \ d_L(u,u') \ < \ b + \frac{1}{2}$$

\noindent
Let $\tau$ be the segment of $l$ between $w, u$.
There is a corresponding segment of $\tau_i$ of $l_i$ between points $w_i, u_i$ so that 
the Hausdorff distance in $\mi$ from $\tau$ to $\tau_i$ is $< < 1$.
The corresponding geodesic segment $m_i$ from
$w_i$ to $u_i$  in $L_i$ is less than $b$ from $\tau_i$ and 
by choice of $w', u'$ then the midpoint 
of $m_i$
is less than $\epsilon /3$ from a point $v_i$ in $s_i$. 
%In the same way $m_1$ is less than $\epsilon/3$
%from a point $v_i$ in $s_i$. 
Hence $v_i$ is less than $\epsilon$ from $z$.
By adjusting the $\epsilon$ to converge to $0$ and 
the $i$ to increase, one finds $v_i$ in $s_i$ with $v_i$ converging to $z$.

Suppose now that $z_{i_k}$ are in $s_{i_k}$ with $s_{i_k}$ 
contained in $L_{i_k}$. Suppose the sequence $s_{i_k}$ 
 converges to $z$ in $\mi$.
The proof is very similar to the above:
Fix $\epsilon > 0$.
Choose big segments in $s_{i_k}$ centered in $z_{i_k}$.
The length is fixed and depends on $\epsilon$.
There are geodesic arcs of $L_{i_k}$ with endpoints in 
the leaves $l_{i_k}$ whose midpoints  are very close
to $z_{i_k}$. Very close depends on $\epsilon$ and the length above.
There are arcs in $l_i$ with these endpoints so that the
above arcs converge up to a subsequence
to a segment in $l$ by the first part of the lemma.
The geodesic arcs above 
converge to a geodesic arc with endpoints in $l$.
Up to subsequence the midpoints of the geodesic arcs (that is, the $z_{i_k}$)
converge to a point (this point is $z$)
which
is close to a point in $s$, closeness depending on $\epsilon$. 
Now make $\epsilon$
converge to $0$ and prove that $z$ is in $l$.
This finishes the proof of the lemma.
\end{proof}

At this point it is 
convenient to do the following: for the remainder of the article we fix a leaf
$L$ of $\gn$.
% and identify $\uu$ with $\pin L$.
%Since each orbit of $\wwp$ intersects $L$ once, we identify the orbit 
%space $\oo$ of $\wwp$ with the leaf $L$. An orbit of $\wwp$ is associated to its intersection
%with $L$.
%This identification is a homeomorphism.
The bijection $L \cup \pin L \rightarrow \oo \cup \uu$ is a homeomorphism.
Therefore 
the action of $\pi_1(M)$ on $\oo \cup \uu$ induces an action by homeomorphisms on 
$L \cup \pin L$ under this identification.
%$\oo$. Under the identification of $\oo$ with $L$ it induces an action on $L$.
This action leaves invariant the foliations $\fnsL, \fnuL$ $-$ which are the intersections
of $\fns, \fnu$ with $L$.
%As seen  in proposition \ref{compat} there is also an action on 
%$\oo \cup \uu$ and hence an action by homeomorphisms on
%$L \cup \pin L$.

We need one more auxiliary fact.

\begin{lemma}{}{}
Let $E$ be a leaf of $\gn$ and $l_1, l_2$ distinct leaves of $\fnsE$ or $\fnuE$.
Then $l_1, l_2$ do not share an ideal point in $\pin E$.
\label{brick}
\end{lemma}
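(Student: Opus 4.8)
The plan is to argue by contradiction, supposing that two distinct leaves $l_1, l_2$ of (say) $\fnsE$ share an ideal point $\xi$ in $\pin E$. By Lemma \ref{quasi}, both leaves are uniform quasigeodesics in $E$, so each is a bounded distance (in the hyperbolic metric of $E$) from a geodesic ray/line; two leaves sharing an ideal point $\xi$ would then have the geodesics approximating their relevant rays converging to the same point $\xi$ in $\pin E$, which forces the two rays of $l_1, l_2$ heading toward $\xi$ to be asymptotic in $E$. The first step is therefore to translate ``sharing an ideal point'' into ``containing asymptotic rays,'' using the uniform quasigeodesic property to pass between the genuine leaves and their associated geodesics.

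The heart of the argument should exploit the $\pi_1(M)$ action together with the structure of $\oo$. Since $E \cup \pin E$ is identified with $\oo \cup \uu$, the leaves $l_1, l_2$ correspond to line leaves of $\oos$ in the orbit space, and the shared ideal point corresponds to a common point of $\uu$. In the orbit space picture, two distinct stable line leaves whose rays are asymptotic bound a region that is a ``stable product region'' or forces a non-Hausdorffness in the leaf space. I would use the fact established in Lemma \ref{quasi} that the leaf space of $\fns$ is Hausdorff: if $l_1, l_2$ were asymptotic one could produce a sequence of stable leaves between them converging to two distinct limits, or else directly produce a non-separated pair, contradicting Hausdorffness. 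Concretely, I would pick points $x_n$ on $l_1$ and $y_n$ on $l_2$ both escaping toward $\xi$ with $d_E(x_n, y_n)$ bounded, push down to $M$, extract convergent subsequences via covering translations $g_n$, and obtain a limiting configuration of two distinct stable leaves a bounded distance apart along a whole ray — which is exactly the kind of freely-homotopic closed-orbit / non-separated leaf phenomenon ruled out by the Hausdorff leaf space.

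The main obstacle I anticipate is the singular case: when $\ws$ has $p$-prong singular orbits, ``leaf of $\fnsE$'' must be interpreted carefully, and two distinct line leaves emanating from the same singular leaf could a priori share an ideal point. I would handle this by invoking the notion of line leaf already set up before the lemma — restricting attention to properly embedded copies of $\rrrr$ — and by noting that consecutive line leaves of a singular leaf meet only in a ray, so distinct line leaves diverge. The delicate point is ruling out that two line leaves coming from genuinely different singular (or nonsingular) leaves are asymptotic; here the compactness/covering-translation limiting argument, combined with the regulating hypothesis (every orbit meets every leaf of $\gn$, giving bounded flow times as in the Claim in Proposition \ref{const}), should close the gap. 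I expect the limiting step — ensuring the extracted limit consists of two genuinely distinct leaves rather than collapsing to one — to require the most care, and it is precisely there that Hausdorffness of the $\fns$ leaf space from Lemma \ref{quasi} does the essential work.
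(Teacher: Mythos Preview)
Your limiting/covering-translation setup is the right opening move and matches the paper's, but the punchline you propose does not work. The assertion that a limiting configuration of ``two distinct stable leaves a bounded distance apart along a whole ray'' is ``exactly the kind of \ldots\ non-separated leaf phenomenon ruled out by the Hausdorff leaf space'' is the gap. Hausdorffness of the leaf space of $\fns$ only says that a convergent sequence of leaves has a unique limit; it does \emph{not} forbid two distinct leaves of $\fnsL$ from sharing an ideal point in $\pin L$, nor does it by itself manufacture a non-separated pair or a free homotopy of closed orbits. So after your extraction you are left with $v_1 \neq v_2$ sharing one ideal point, and nothing you have invoked yields a contradiction.

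What the paper actually does after the extraction is substantially more: first it argues that the limit line leaves $v_1, v_2$ share \emph{both} ideal points (using that the translated basepoints sit deep inside long segments whose associated geodesic arcs must coalesce in the limit). Then, with $v_1, v_2$ bounding a region $R$ all of whose stable leaves have the same two ideal points $a_1, a_2$, it picks a \emph{periodic} orbit $\alpha$ near $v_1$ whose unstable leaf crosses $v_1$, takes $g \in \pi_1(M)$ generating the isotropy of $\alpha$, and observes that $g$ fixes $a_1, a_2$. The iterates $g^n(v_1)$ are nested uniform quasigeodesics with fixed ideal points, hence they cannot escape and limit on a $g$-invariant stable leaf $v$; this produces a \emph{second} $g$-fixed point in $\oo$. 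Two $g$-fixed points in $\oo$ give closed orbits freely homotopic to inverses of each other, and \emph{that} is what contradicts the regulating hypothesis (via the induced action on the leaf space $\hp$ of $\gn$). Hausdorffness is used only incidentally, to guarantee the limit $v$ is a single leaf. Finally, you have not addressed the mixed case (one leaf of $\fnsE$ and one of $\fnuE$); the paper handles it by the same limiting trick to get a stable and an unstable line leaf sharing both ideal points, and then feeds a stable leaf crossing the unstable one back into the stable-stable case already proved.
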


\begin{proof}{}
%By the identification between circles at infinity, we may assume that 
%$E$ is the fixed leaf $L$.
Suppose first by way of contradiction that there are $l_1, l_2$ rays in 
leaves of $\fnsE$ for some $E$ in $\gn$
with the same ideal point $a$ in $\pin E$ and so that $l_1, l_2$ do not share a subray. 
We can assume that $l_1, l_2$ do not have singularities.
Let $u_j, j = 1,2$ be the starting point of $l_j$.
Let $r_j, j = 1,2$ be a line leaf of $\fnsE$ containing $l_j$.
Choose points $p_i$ in $l_1$ escaping in $l_1$. 
As explained before the leaves of $\fnsE$ are uniform quasigeodesics in $E$ and
hence they are at a bounded distance in $E$ from geodesics in $E$. This implies that
there are $q_i$ in $l_2$ so that $q_i$ are a bounded distance from $p_i$ in $E$.
Up to taking a subsequence we may assume that $\pi(p_i)$ converges in $M$.
Let then $g_i$ in $\pi_1(M)$ with $g_i(p_i)$ converging to $p_0$. 
For simplicity of explanation we assume that the leaf of $\gn$ containing
$p_0$ is the fixed leaf $L$ as above.
Let $v_1$ be the line leaf of $\fnsL$
containing $p_0$ and which is the limit of the $g_i(r_1)$ as proved in the
previous lemma.
If $\ws(p_0)$ is singular then,
up to taking a subsequence, we may assume that the $g_i(p_i), g_i(r_i)$ satisfy
the requirements of the previous lemma.
%Notice that $g_i(l_1)$ flow entirely into $L$ by the regulating property.

Since the distance along $g_i(E)$ from $g_i(p_i)$ to $g_i(q_i)$ is bounded
we may assume up to subsequence that $g_i(q_i)$ also converges and let $q_0$
be its limit. It follows that $q_0$ is also in $L$ and let $v_2$ be the line leaf
of $\fnsL$ containing $q_0$ which is the limit of $g_i(r_2)$. 
Here
the rays 

$$g_i(l_1), \ g_i(l_2) \ \ {\rm in} \ \  E \ \ \ {\rm have \ the \ same \
ideal \ point} \ \   g_i(a) \ \ {\rm in}  \ \ \pin (g_i(E))$$

\noindent
The line leaves $r_j$ are uniform quasigeodesics in $E$ and a bounded distance
from a geodesic $s_j$ in $E$.
Hence the geodesics $g_i(s_1), g_i(s_2)$  share an ideal point in $\pin g_i(E)$. 
By the second part of the previous lemma $g_i(s_j)$ converges to a geodesic $t_j$ in $L$
with same ideal points as $v_j$ for both
$j = 1,2$.
By continuity of geodesics in leaves of $\gn$, it follows that $t_1$ and $t_2$
share an ideal point.
%The second part of the previous lemma then implies that $v_1, v_2$ share an ideal point
Therefore $v_1, v_2$ share an ideal point in $\pin L$.
%By the construction of the universal circle in either the uniform or non
%uniform situation it follows that $v_1, v_2$ share an ideal point
%in the direction corresponding to the rays $g_i(l_1), g_i(l_2)$.
%Notice that $v_1, v_2$ are quasigeodesics in $L$.

We claim that $v_1, v_2$ also share the other ideal point.
%For each $i$ choose $p'_i$ in $g_i(v_1)$ which is a bounded distance from
%$g_i(p_i)$. 
The line leaves $g_i(r_1), g_i(r_2)$ have big segments from 

$$g_i(u_1) \ \ {\rm to} \ \ g_i(p_k) \ \ \ \ {\rm and} \ \ \ \ 
g_i(u_2) \ \ {\rm to} \ \ g_i(q_k)$$

\noindent
which are boundedly close to each other. Here $k > > i$ and so $g_i(p_i)$ is in
these segments.
Also $g_i(p_i)$ converges to $p_0$.
The corresponding geodesic arcs between the points above have endpoints which are boundedly
close to each other.
As explained in the proof of the previous lemma
they have middle thirds which are arbitrarily close to each other.
The limits of the geodesic arcs
are contained in $t_1$ and $t_2$. This shows that $t_1$ and $t_2$ have points
in common and therefore are the same geodesic.
%Now $g_i(s_1)$ and $g_i(s_2)$ are geodesics in $g_i(E)$ which 
%are asymptotic and the have points $p'_i$ converging to a point $p'_0$ in $S$
%and also points boundedly close to $g_i(u_1)$ and $g_i(u_2)$. It follows that
%the distance from $p"_i$ to $g_i(r_2)$ converges to $0$. This implies

\vskip .1in
Suppose first that $v_1, v_2$ are distinct.
The two line leaves $v_1, v_2$ of $\fnsL$ have the same 
two ideal points, which we denote by $a_1, a_2$.
The line leaves 

$$v_1, \ v_2 \ \ \ {\rm bound \ a \ region} \ \  R \ \ {\rm in} \ \  L$$

\noindent
For any stable leaf $l$  of
$\fnsL$ in $R$ then $l$ has ideal points which can only be $a_1, a_2$.
But $l$ is a quasigeodesic in $L$. Therefore this leaf is non singular
and has ideal points exactly $a_1, a_2$. Now consider a periodic orbit
$\alpha$ of $\wwp$ intersecting $L$ in $R$ very close to $v_1$ so that the unstable leaf
$\wu(\alpha)$ intersects $v_1$. 
Notice that the set of periodic orbits of $\Phi$ is dense in $M$ 
when $\Phi$ is transitive as proved by Mosher  \cite{Mo1}.
In addition if $M$ is atoroidal then $\Phi$ is transitive
\cite{Mo1}.

We now use that $L \cup \pin L$ is identified with $\oo \cup \uu$.
Let $g$ in $\pi_1(M)$ non trivial so that $g(\alpha) = \alpha$ and in addition
$g$ leaves invariant all components of $\ws(\alpha) - \alpha$.
Under the identifications above 
then 

$$g \ \ \ {\rm  fixes} \ \ \  a_1 \ {\rm and} \ a_2 \ \ {\rm in} \ \ \pin L$$

\noindent
Notice that $a_1, a_2$ are the ideal points of $\ws(\alpha) \cap L$ in
$\pin L$.
Assume that $g^n(\ws(v_1))$ moves away from $\ws(\alpha)$
when $n$ converges to infinity.
Since $v_1$ (line leaf of $\fnsL$) has ideal points $a_1, a_2$, it follows that
the same happens for all leaves $g^n(\ws(v_1)) \cap L$. These line leaves are nested
in $L$ 
and they are uniform quasigeodesics in $L$, so they cannot escape compact sets in $L$.
Hence they have to limit in a line leaf $v$ of $\fnsL$. Since the leaf
space of $\fnsL$ is Hausdorff, the limit is unique, which implies that
$g(v) = v$.
The leaf $z$ of $\oos$ corresponding to $v$ is also invariant under $g$.
This produces a point $y$ of $\oo$ in $v$ which is invariant under $g$.
Let $\beta$ be the orbit of $\wwp$ with $\Theta(\beta) = y$.
But also $g$ leaves invariant the point $x = \Theta(\alpha)$.
This shows that there
are 2 fixed points in $\oo$ under $g$.
% (these are $x$ and $\Theta(\alpha)$.
Then $\pi(\alpha), \pi(\beta)$ are closed orbits of $\wwp$ which
up to powers are freely homotopic to the inverse of each other.
Since $\Phi$ is regulating, this is impossible:
%Basically one proves first that the above conditions imply that
%there is an orbit $\gamma$ of $\wwp$ so that
%$\pi(\gamma)$ is freely homotopic to the inverse of $\pi(\alpha)$ \ \cite{Fe1}.
Notice that $g$ is associated to the negative flow direction in $\alpha$
$-$ as it acts as an expansion in the set of orbits of
$\wu(\alpha)$.
The regulating property applied to $\alpha$ implies that $g$ acts freely
and in an decreasing fashion on the leaf space $\hp$ of $\gn$.
The property that $\pi(\beta)$ is freely homotopic to the inverse
of $\pi(\alpha)$ implies that $g$ would have to act in a decreasing way
on $\hp$, contradiction.
Notice that the last argument is about the leaf space of $\gn$ and not of $\fns$.
This contradiction
shows that $l_1, l_2$ cannot have the same ideal point in $E$.
This finishes the analysis if $v_1, v_2$ are distinct.

If $v_1 = v_2$, then for $i$ big enough we may assume that
$p_i$ is very closed to $q_i$. Then one can choose
$\alpha$ periodic with $\wu(\alpha)$ intersecting both $l_1$ and 
$l_2$. It follows that $\ws(\alpha) \cap L$ has one endpoint $a$.
Then one applies the same arguments as in the case $v_1, v_2$ distinct
to produce a contradiction.
This finishes the first part of the lemma.

\vskip .1in
We now prove that if $l_1$ is a ray in a  leaf of $\fnsL$ and $l_2$ is 
ray in a leaf of $\fnuL$ then they cannot share an ideal point in $\pin L$.
Suppose this is not the case. Apply the same limiting procedure as above
to produce a stable line leaf $s_1$ in $\fnsL$ and an unstable line leaf
$s_2$ in $\fnuL$ which share two ideal points. 
Clearly in this case they cannot be the same leaf and
they bound a
region $R$ in $L$ with ideal points $a_1, a_2$. Consider a non singular stable leaf $l$ 
intersecting
$s_2$. Then it enters $R$ and cannot intersect the boundary of $R$ (in $L$) again.
Therefore it has to limit in either $a_1$ or $a_2$ and share an ideal
point with a ray of $s_1$. This is disallowed by the first part of the proof.
\end{proof}

Given these facts the following happens:
For any $L$ in $\gn$ and leaf $l$ in $\fnsL$ if $l$ is non singular
let $l^*$ be the geodesic in $L$ with same ideal points as $l$.
If $l$ is a $p$-prong leaf, let $\delta_1,...,\delta_p$ be the
line leaves of $l$ and $\delta^*_i$ be the corresponding geodesics.
In this case let $l^*$ be the union of the $\delta^*_i$, which is
a $p$-sided ideal polygon in $L$.
Let $\lsl$ be the union of such $l^*$ for $l$ in $\fnsL$ and
similarly define $\lul$.

By lemma \ref{break} it follows that $\lsl, \lul$ are closed subsets
of $L$ and there are geodesic laminations
in $L$.
The complementary regions of $\lsl$ are exactly those associated
to $p$-prong leaves of $\fnsL$ this also follows from lemma \ref{break}
and hence are finite sided ideal polygons.
As leaves of $\fnsL$ are uniform quasigeodesics (lemma \ref{quasi}), then $\lsl$ varies
continuously if $L$ varies in $\gn$.
This produces a lamination in $M$ which intersects leaves of 
$\gal$ in geodesic laminations. As $\fnsL, \fnuL$ have no rays
which share an ideal point, it follows that $\lsl$ is transverse
to $\lul$.
It now follows that for any $p$ in $\pin L$, then $p$ has a neighborhood
system in $L \cup \pin L$ defined by a sequence of leaves in
either $\lsl$ or $\lul$.
herefore the same holds for $\fnsL, \fnuL$ as these are uniform
quasigeodesics.

We now analyse the properties of the action of $\pi_1(M)$ on $\uu$.

\begin{proposition}{}{}
Let $\gal$ be an $\rrrr$-covered foliation with a transverse regulating
pseudo-Anosov flow $\Phi$.
Let $g$ in $\pi_1(M)$ be a non trivial element. Then one of the
following options must happen:

\vskip .08in
$I$ $-$ If $g$ fixes $3$ or more points in $\uu$, then $g$ does not act
freely on $\oo$ and has a unique fixed point $x$ in $\oo$. Here $g$
is associated to a closed orbit of $\Phi$.
In addition $g$ acts by an orientation preserving homeomorphism of $\oo$
and $g$ leaves invariant each prong of $\oos(x), \oou(x)$
when acting  on $\oo$. Hence $g$ fixes
the ideal points of $\oos(x), \oou(x)$ in $\uu$ which are even in number.
These are the only fixed points of $g$ in $\uu$ and they are alternatively
repelling and attracting;

\vskip .08in
$II$ $-$ $g$ fixes exactly two points in $\uu$. Then, either \ 1) $g$ acts freely on
$\oo$ and there is one attracting and one repelling fixed point 
in $\uu$; or
\ 2) $g$ fixes a point $x$ in $\oo$ and leaves invariant exactly two prongs 
of (say) $\oos(x)$
but not those of $\oou(x)$ or any other possible prongs of $\oos(x)$ (or vice versa). 
Here $g$ reverses orientation in $\oo$.
The orbit associated to $x$ may be non singular in which case all prongs
of $\oos(x)$ are left invariant and there are 4 fixed points in $\uu$ under the
square of $g$. The orbit associated to $x$ may be singular. Then the
square of $g$ has more than $4$ fixed points in $\uu$.

\vskip .08in
$III$ $-$ $g$ has no fixed point in $\uu$. Then $g$ fixes a single point
$x$ in $\oo$ and a power of $g$ fixes an even number $\geq 4$ of points
in $\uu$.

\vskip .08in
Consequently, $g$ always fixes a finite even number of points in $\uu$ (it may
be zero).
\label{act}
\end{proposition}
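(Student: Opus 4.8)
The plan is to study $g$ through its action on the closed disk $D = \oo \cup \uu$, identified with $L \cup \pin L$ for the fixed leaf $L$ of $\gn$; on $D$ the element $g$ acts as a homeomorphism, and $\pi_1(M)$ preserves the transverse laminations $\lsl, \lul$ and the foliations $\oos, \oou$. Since $D$ is a closed disk, Brouwer's theorem gives a fixed point of $g$ in $D$, so the basic dichotomy is whether $g$ fixes a point $x$ of $\oo$ or acts freely on $\oo$. If $g$ fixes $x \in \oo$, then $x$ is by definition periodic, its orbit has infinite cyclic stabilizer generated by some $h$, and $g = h^k$. First I would record that $g$ permutes the prongs of $\oos(x)$ and $\oou(x)$ by a rigid symmetry which, because $g$ preserves $\oos$ and $\oou$ separately, is a rotation of the $2p$ cyclically alternating prongs (when $g$ preserves orientation) or a reflection (when $g$ reverses it). A rotation fixes either all $2p$ prongs or none, a reflection either two or none; in every case the number of $g$-fixed prongs is even, equal to $0$, $2$, or $2p$.

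The core step is to show that, in this fixed-$x$ situation, the fixed points of $g$ in $\uu$ are precisely the ideal points of the $g$-fixed prongs. One direction is straightforward: by Lemma \ref{brick} distinct prongs have distinct and unshared ideal points in $\pin L = \uu$, a $g$-fixed prong gives a $g$-fixed ideal point, and the $\lambda^{\pm k}$ contraction or expansion of $g$ along the prong makes each such point two-sided attracting (for unstable prongs) or two-sided repelling (for stable prongs). The opposite direction, that there are no further fixed points, is the main obstacle. Given a hypothetical fixed point $y \in \uu$ that is not a prong ideal point, I would use the fact established just before the proposition, that $y$ has a neighbourhood basis in $L \cup \pin L$ consisting of leaves of $\lsl$ or $\lul$, together with the geometric-limit Lemma \ref{break} and the Hausdorffness of the leaf spaces of $\fns, \fnu$ (Lemma \ref{quasi}), to extract a $g$-invariant line leaf not passing through $x$ and a second $g$-fixed point of $\oo$ on it. Exactly as in the proof of Lemma \ref{brick}, the two resulting closed orbits would be freely homotopic to the inverse of each other, which the regulating hypothesis forbids, since $g$ would then be forced to act both increasingly and decreasingly on the leaf space $\hp$ of $\gn$. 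This contradiction pins the fixed set in $\uu$ to the fixed-prong ideal points, giving Case $I$ (orientation preserving, all $2p \geq 4$ prongs fixed, alternately attracting and repelling) and the non-free alternative of Case $II$ (orientation reversing, exactly two prongs fixed); the equivalence of the orientation behaviour on $\oo$ and on $\uu$ recorded in the text matches these orientation statements. In Case $III$, where $g$ fixes no point of $\uu$, the fixed-prong set is empty, so the symmetry is a nontrivial rotation; a power $g^m$ then fixes all prongs, producing an even number $\geq 4$ of fixed points. Uniqueness of the fixed point $x$ in $\oo$ (asserted in Cases $I$ and $III$) again follows from Lemma \ref{quasi}, since a second fixed orbit would create a forbidden free homotopy.

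It remains to treat $g$ acting freely on $\oo$. Here Brouwer applied to $D$ already forces at least one fixed point of $g$ in $\uu$. A Lefschetz index count on the disk — the total index of $g$ on $D$ equals $1$, while a free interior action contributes no interior index — together with the Brouwer translation theory for fixed-point-free orientation-preserving plane homeomorphisms, forces the fixed set on the boundary circle to consist of exactly two points, one attracting and one repelling; three or more boundary fixed points would drive the index sum away from $1$ and hence produce an interior fixed point, contradicting freeness. This yields Case $II(1)$, and closes the dichotomy.

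Finally, the consequence follows uniformly: in every case the fixed points of $g$ in $\uu$ are isolated, finite in number, and each is two-sided attracting or two-sided repelling, with none semistable. A homeomorphism of $\uu$ whose fixed points are all of this hyperbolic type must have them alternate between attracting and repelling around the circle, so their number is even, possibly zero. The decisive and most delicate step, I expect, is the ``no extra fixed points'' argument of the second paragraph: converting a hypothetical fixed point of $g$ in $\uu$ into a genuine $g$-invariant stable or unstable line leaf through a periodic point, via the neighbourhood-basis structure and the limiting Lemma \ref{break}, and then invoking the regulating hypothesis exactly as in Lemma \ref{brick} to rule out the resulting pair of freely-homotopic-inverse closed orbits.
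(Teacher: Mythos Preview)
Your treatment of the case where $g$ fixes a point $x \in \oo$ is close in spirit to the paper's, though organized differently. The paper, rather than arguing by contradiction from a hypothetical extra fixed point $y \in \uu$, shows \emph{directly} that the power $h = g^{i_0}$ fixing all prongs contracts each open arc of $\uu$ between consecutive prong ideal points: for a stable prong $\tau$ with ideal point $p$ and any unstable leaf $\mu$ crossing $\tau$, one proves that $h^n(\mu)$ escapes compact sets in $\oo$ (otherwise the nested $h^n(\mu)$ limit on an $h$-invariant unstable leaf, producing a second periodic point and the same regulating contradiction you cite), and since the $h^n(\mu)$ are nested uniform quasigeodesics they shrink to $p$. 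This gives the attracting/repelling structure on the nose. Your version, extracting a $g$-invariant line leaf from a neighborhood basis at $y$, is morally the same contradiction but you do not explain the extraction step; the paper's direct dynamical argument is cleaner.

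The genuine gap is in the free case. Your appeal to a Lefschetz index count on the closed disk $D = \oo \cup \uu$ is not justified: the Lefschetz theorem in the form ``sum of indices equals $\chi(D) = 1$'' applies to isolated \emph{interior} fixed points, and for fixed points on $\partial D$ the local index in $D$ is \emph{not} determined by the dynamics on the boundary circle alone --- it depends on the transverse behavior into the disk, which you have no control over a priori. So the assertion that ``three or more boundary fixed points would drive the index sum away from $1$'' is unsupported; one cannot rule out, say, four boundary fixed points whose half-disk indices happen to sum correctly. Brouwer translation theory tells you points in $\oo$ wander, but does not by itself bound the number of fixed points on $\uu$.

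The paper avoids this entirely by working not on $\oo$ but on the leaf space $\hhs$ of $\oos$, which by Lemma~\ref{quasi} is Hausdorff and hence a simplicial (${\bf R}$-)tree. The dichotomy is then the standard one for ${\bf Z}$-actions on trees: either $g$ fixes a leaf of $\hhs$ (and one recovers the fixed point $x \in \oo$), or $g$ acts freely on $\hhs$ and therefore has an \emph{axis}. Taking a non-singular leaf $l$ on the axis, the iterates $g^n(l)$ form a nested family of uniform quasigeodesics in $L$ which escape in $L$; nestedness plus the uniform quasigeodesic bound forces $g^n(l)$ to converge to a single point $p \in \pin L$ as $n \to +\infty$ and to a single point $q$ as $n \to -\infty$. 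These are then the only fixed points of $g$ (or any power) on $\uu$, with $p$ attracting and $q$ repelling. This is the mechanism you are missing, and it is what makes Case~$II(1)$ work.
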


\begin{proof}{}
%Suppose first that $g$ fixes $3$ or more points in $\uu$.
Since $g$ acts on $\oo$ and leaves invariant the foliation $\oos$, then 
it acts on the leaf space $\hhs$ of $\oos$.
This is the same as the leaf space of $\fnsL$ (under the identification
of $\oo$ with $L$).
%The leaves of $\fnsL$ are uniform quasigeodesics in $L$ (maybe with prongs).
%If the leaf space of $\fnsL$ were not Hausdorff then one would find
%$p_n, q_n$ in leaves $l_n$ of $\fnsL$ which converge to $p$ and $q$ 
%respectively with $p$ in leaf $l$ of $\fnsL$, $q$ in
%leaf $r$ of $\fnsL$ and $r$ different from $l$. The last fact
%implies that the length of $l_n$ between $p_n$ and $q_n$ must 
%be converging to infinity. But the distance between them in $L$ is
%bounded, since $p_n, q_n$ converge to $p, q$. That would imply
%the leaves $l_n$ are not uniform quasigeodesics, contradiction.
Recall that the leaf space of 
%$\fnsL$ is Hausdorff and hence that of 
$\oos$ is Hausdorff. 
Therefore the leaf space $\hhs$ of $\oos$ is
a topological tree \cite{Fe3}.
The same happens for the leaf space of $\oou$.

\vskip .07in
Given any $g$ in $\pi_1(M)$ it induces a homeomorphism of this
topological tree $\hhs$.
$\bf Z$ actions on such trees are  well understood \cite{Ba,Fe3,Ro-St}.
There are 2 options:

$-$ $g$ acts freely and has an axis $v$. Elements in the
axis are those $z$ in $\hhs$ for which $g(z)$ separates $z$ from
$g^2(z)$, or

$-$ $g$ fixes a point in $\hhs$.
\vskip .07in

Suppose first that $g$ acts freely on $\hhs$. Then $g$ has an axis  $v$
for its action on $\hhs$ and consequently an axis for its
action on the leaf space of $\fnsL$.
Let $l$ be a leaf of $\fnsL$ in the axis and we may assume that
$l$ is non singular.
By the axis properties it follows that the leaves

$$\{ g^n(l),  \ \ n \in {\bf Z} \}$$

\noindent
are nested in $L$ and they are uniform quasigeodesics.
Since they escape when viewed in the leaf space of $\fnsL$, the
same is true in $L$.
As they are uniform quasigeodesics and nested, then there are 
unique points $p, q$ in $\pin L$ so that $g^n(l)$ converges to $p$
if $n$ converges to infinity and to $q$ if $n$ converges
to minus infinity. Hence under the identification of $\uu$ with
$\pin L$, then $p, q$ are the unique fixed points of (any power of)
$g$ in $\uu$, where $p$ is attracting and $q$ repelling. 
%The same
%happens for the action in $\uu$.
In this case the action of $g$ in $\oo$ could be orientation
preserving or not.
This is case II, 1).

\vskip .1in
From now on in the proof we assume that $g$ has a fixed point
in $\hhs$, so there is a leaf $l$ of $\hhs$ with $g(l) = l$.
Then the leaf 

$$\Theta(l) \ \ {\rm of} \ \ \oos \ \ \ {\rm contains \ a \ unique} \ \ x 
\ \ {\rm in} \ \ \oo \ \ {\rm with} \ \ g(x) \ = x$$

\noindent
If $g$ has no fixed points in $\uu$ then it acts as an orientation
preserving homeomorphism on $\uu$ and hence the same happens for the action
on $\oo$.

There is a smallest positive integer $i_0$ so that $h = g^{i_0}$
leaves invariant all prongs of $\oos(x), \oou(x)$.
If there are $2n$ such prongs, each generates an ideal point of $L$
and also a point of $\uu$. By lemma \ref{brick}
any two distinct prongs have different ideal points in $\uu$.
Hence $h$ has at least $2n$ fixed points in $\uu$.
Let $\alpha$ be the flow line of $\wwp$ with $\Theta(\alpha) = x$.
Without loss of generality assume that the prongs above are
circularly ordered with corresponding ideal points

$$a_1, b_1, ..., a_n, b_n \ \  {\rm in} \ \ \uu \ \ {\rm where} \ \ \ 
\partial \oos(x) = \{ a_1, a_2, ..., a_n \}, \ \ 
\partial \oou(x) = \{ b_1, b_2, ..., b_n \}.$$

\noindent
Suppose that $g$ is associated to the positive flow direction in $\alpha$.
Fix  a prong $\tau$ of $\oos(x)$ and let $I$ be the maximal interval of
$\uu - \partial \oou(x)$ containing the ideal point of $\tau$.
Let now $\mu$ be an arbitrary unstable leaf of $\oou$ intersecting
$\tau$.
Then as $\mu$ gets closer to prongs of $\oou(x)$, the ideal points of
$\mu$ approach the endpoints of $I$. The action of $h$ on
$\tau$ is as follows: $h$ fixes $x$ and for a leaf $\mu$ as above then
$h$ takes  it to a leaf farther away from  $x$.
This is because in $\mi$ the flow lines along stable leaves move
closer in forward time. 
Notice that $h$ acts on $\mi$ as an isometry. This isometry
takes a flow line in $\wws(\alpha)$ to
one which is at the same distance from $\alpha$, but farther away from
$\alpha$ than $\beta$ is $-$ as $\beta$ is getting closer to $\alpha$.
Flowing back to the original position it means that
the image is farther from the orbit $\alpha$ thatn $\beta$ is.
It follows that $h$ acts as an expansion in $\tau$ with a single fixed point
in $x$.
Given $\mu$ as above then $h^n(\mu) \cap \tau$ escapes in $\tau$ as
$n$ converges to infinity.
These also form a nested collection of leaves.
If the sequence $h^n(\mu)$ does not escape compact sets in $\oo$, then
it limits in a collection

$${\cal W} = \{ W_i, i \in C \}$$

\noindent
of leaves of $\oou$, where $C$ is an interval in ${\bf Z}$ either
finite or all of ${\bf Z}$ \cite{Fe6}.
In addition $h$ leaves invariant ${\cal W}$. 
If ${\cal W}$ is not finite, then in particular it is not a single point
and then the leaf space of $\oos$ is not Hausdorff, which is
impossible as seen previously.
If on the other hand ${\cal W}$  is a single leaf $W$, then $h(W) = W$ and
there is a single periodic point $z$ in $W$ with $h(z) = z$. Then $h$ fixes
$x$ and $z$ and this is also impossible as seen above.

It follows that $h^n(\mu)$ escapes compact sets in $\oo$ and 
as seen in the free action case, they can only limit in a single point of $\uu$,
which corresponds to the ideal point $p$ of $\tau$.
This shows that $h$ acts as a contraction in $I$ with fixed point $p$.
Hence the points $a_i, 1 \leq i \leq n$ are attracting fixed points
of $h$ in $\uu$.
Using $h^{-1}$ one shows that the $b_i, 1 \leq i \leq n$ are repelling fixed points
and these are the only fixed points of $h$ in $\uu$.
%Since this works for all prongs of $\oou(x)$ it follows that
Hence $h$ fixes exactly $2n$ points in $\uu$, where $n \geq 2$.

\vskip .1in
We now return to $g$. If $g$ is orientation reversing on $\uu$, then so is the
action on $\oo$. In this case there are exactly $2$ fixed points of $g$ in
$\uu$.
The square of $g$ is now orientation preserving on $\uu$ and it has
fixed points. In particular any fixed point of $g^{2i}$ is a fixed point
of $g^2$. It follows that $h$ is equal to $g^2$ and this is case II, 2).

Suppose finally that $g$ is orientation preserving on $\uu$.
Since $h = g^{i_0}$ has fixed points in $\uu$, then either $g$ has
no fixed points in in $\uu$ or $g$ has exactly the same fixed points
in $\uu$ as $h$ does. 
In the second case $h$ is equal to $g$ and
$g$ has exactly $2n$ fixed points
in $\uu$, which are alternatively attracting and contracting.
This is case I).
In the first case $g$ acts essentially as a rotation in
$\uu$ and $\oo$. 
This is case III).

This finishes the proof of the proposition.
\end{proof}

\section{Construction of the conjugacy}

Let now $\Phi, \Psi$ be two pseudo-Anosov flows transverse to the $\rrrr$-covered
foliation $\gal$ and both regulating for $\gal$.
We want to show that $\Phi$ and $\Psi$ are topologically conjugate.
Let $\oo$ be the orbit space of $\wwp$ and $\tz$ be the orbit space of 
$\widetilde \Psi$.
We will construct a $\pi_1(M)$-equivariant homeomorphism from $\oo$ to $\tz$. We first associate
to each periodic orbit of $\Phi$ a unique periodic orbit of $\Psi$.
Let

$$\Theta_1: \ \mi \ \rightarrow \ \oo \ \ \ \ \ {\rm and} \ \ \  \ \ 
\Theta_2: \ \mi \ \rightarrow \ \tz$$

\noindent
be the corresponding orbit space projection maps.
Let $\oos, \oou$ be the projection stable and unstable foliations of $\wwp$ to $\oo$ and
$\tzs, \tzu$ the corresponding objects for $\widetilde \Psi$.
Recall that $\pi: \mi \rightarrow M$ is the universal covering map.

One main property to note here is that the universal circle $\uu$ depends
only on $\gal$ and not on $\Phi$ or $\Psi$. The same is true for the action
of $\pi_1(M)$ on $\uu$.
Before we prove the theorem, we first prove a preliminary property:

\begin{lemma}{}{}
Let $\alpha$ be an orbit of $\wwp$ so that $\pi(\alpha)$ is a closed orbit
of $\Phi$. Let $g$ be the element of $\pi_1(M)$ associated
to the closed orbit $\pi(\alpha)$. 
 Then there is a unique closed orbit $\beta$ of $\widetilde \Psi$
so that $\pi(\beta)$ is periodic and associated to $g$, that is,
$\pi(\beta)$ is freely homotopic to $\pi(\alpha)$. 
\end{lemma}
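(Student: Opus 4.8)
The plan is to exploit the fact, emphasized just before the lemma, that the universal circle $\uu$ and the action of $\pi_1(M)$ on it depend only on $\gal$ and not on the transverse flow. Thus the element $g$ acts on $\uu$ in exactly the same way whether we view its dynamics through $\wwp$ or through $\widetilde \Psi$, while Proposition \ref{act} provides, for \emph{each} flow separately, a dictionary between the fixed points of a group element on $\uu$ and its fixed orbits in the corresponding orbit space. The whole argument is a transfer across this dictionary, using $\uu$ as the flow-independent bridge.

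First I would read off what the $\Phi$-side gives. Since $\pi(\alpha)$ is a closed orbit, $g$ fixes the point $x=\Theta_1(\alpha)$ of $\oo$, so $g$ does not act freely on $\oo$. By Proposition \ref{act} this rules out the free subcase of case II, so $g$ falls into case I, the second subcase of II, or case III for $\Phi$. In each of these three situations the proposition guarantees that some positive power $h=g^{k}$ fixes an even number $\geq 4$ of points in $\uu$: take $k=1$ in case I, $k=2$ in the orientation-reversing subcase of II, and the power furnished by case III otherwise. In particular $h$ fixes at least three points of $\uu$, and $h\neq\mathrm{id}$.

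Next I would feed this into the $\Psi$-side. Because the action of $h$ on $\uu$ is identical regardless of flow, $h$ still fixes at least three points of $\uu$. Applying Proposition \ref{act} now to the flow $\Psi$ and the element $h$, the hypothesis of three or more fixed points on $\uu$ forces case I: $h$ does not act freely on $\tz$, it is associated to a closed orbit of $\Psi$, and it has a \emph{unique} fixed point $y$ in $\tz$. To descend from $h$ back to $g$ I would use that $g$ commutes with $h=g^{k}$: from $h(g(y))=g(h(y))=g(y)$ we see that $g(y)$ is a fixed point of $h$, so uniqueness gives $g(y)=y$. Setting $\beta=\Theta_2^{-1}(y)$, the orbit $\beta$ of $\widetilde \Psi$ is invariant under $g$, hence $\pi(\beta)$ is a closed orbit of $\Psi$ with $g$ in its stabilizer; since $\pi(\alpha)$ and $\pi(\beta)$ are both represented by $g$ they are freely homotopic. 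For uniqueness, if $\beta'$ is any orbit of $\widetilde \Psi$ fixed by $g$, then $h=g^{k}$ fixes $\Theta_2(\beta')$ as well, and the uniqueness of $y$ forces $\Theta_2(\beta')=y$, so $\beta'=\beta$.

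I expect the main obstacle to lie at the two delicate joints of this scheme: verifying in each of cases I, II, III that a genuine power of $g$ itself (and not merely some unrelated element) fixes at least three points of $\uu$, and confirming that the fixed point produced by Proposition \ref{act} on the $\Psi$-side is genuinely unique, so that the commuting-powers argument pins down a single $\beta$. Both of these, however, are delivered directly by the statement of Proposition \ref{act}, so the remaining work is largely bookkeeping; the one genuinely conceptual input is the flow-independence of $\uu$ and its $\pi_1(M)$-action, which is what allows the fixed-point data to be carried from $\Phi$ to $\Psi$.
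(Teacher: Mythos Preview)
Your approach is essentially the paper's: pass to a power $h$ of $g$ with at least four fixed points on $\uu$, invoke case I of Proposition \ref{act} on the $\Psi$-side to get a unique $h$-fixed point $y\in\tz$, and then descend back to $g$. Your commuting-powers argument for $g(y)=y$ is a clean variant of what the paper does (the paper instead argues that $g$ cannot act freely on $\tz$, since any power of a free action would have only two fixed points on $\uu$).

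There is, however, a genuine gap at the last step. From $g(\beta)=\beta$ you conclude ``since $\pi(\alpha)$ and $\pi(\beta)$ are both represented by $g$ they are freely homotopic.'' But $g$ is only known to lie in the stabilizer of $\beta$; if $g_\beta$ denotes the indivisible generator of that stabilizer, then $g=g_\beta^{\,m}$ for some $m\neq 0$, and what you have shown is only that $\pi(\alpha)$ is freely homotopic to $\pi(\beta)^{m}$. You still need $m=\pm 1$, and then that the sign is $+1$. The paper closes this by running the construction in the reverse direction (starting from $\beta$ and $g_\beta$) to get $g_\beta$ in the stabilizer of $\alpha$, hence $g_\beta=g^{n}$; then $mn=1$ forces $m=\pm 1$. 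The remaining sign ambiguity is resolved by comparing the attracting/repelling pattern of $h$ on $\uu$: the attracting fixed points are the stable ideal points for both flows, which pins down the flow direction and rules out the inverse. Adding these two short observations would complete your argument.
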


\begin{proof}{}
Let $x = \Theta_1(\alpha)$ and $g$ non trivial in $\pi_1(M)$ with $g(x) = x$ and
indivisible with respect to this property. Suppose that $g$ is associated
to the forward flow direction of $\pi(\alpha)$.
Let $h$ be the smallest power of $g$ so that $h$ leaves invariant
all prongs of $\oos(x), \oou(x)$.
Proposition \ref{act} shows that $h$ has $2n$ fixed points in $\uu$, with $n \geq 2$.
This is case I).
Now apply this proposition to $h$ and $\Psi$. 
Since $h$ has $2n$ fixed points in $\uu$ and $n \geq 2$, proposition \ref{act} implies 
that there is a unique $y$ in $\tz$
with $h(y) = y$.
Let 

$$\beta \ \ {\rm be \ the \ orbit \ of} \ \ \widetilde \Psi
\ \ {\rm with} \ \ \Theta_2(\beta) \ = \ y, \ \ {\rm so} \ \ 
h(\beta) \ = \beta$$

\noindent
If $g$ acts freely on  $\tz$ then the analysis of  proposition \ref{act} shows that
$h$ can have only 2 fixed points in $\uu$, impossible.
It follows that 
$g$ cannot act freely on $\tz$ and
therefore the only fixed point of $g$ in $\tz$ is $y$ $-$ as it is fixed
under a power of $g$. This implies that $g(\beta) = \beta$ and consequently
$\pi(\alpha)$ is freely homotopic to a power of $\pi(\beta)$.
Reversing the roles of $\alpha$ and $\beta$ implies that
$\pi(\alpha)$ and $\pi(\beta)$ are freely homotopic to each other
or their inverses.
The action of $h$ on $\uu$ shows that the first option is the one that happens 
$-$ this is because they both have attracting fixed points in $\uu$ in the
same points.
This finishes the proof of the lemma.
\end{proof}

This defines a map from the periodic points of $\oo$ to the periodic points
of $\tz$.
Notice that in the lemma above $\partial \oos(x) = \partial \tzs(y)$ as points in $\uu$
and similarly for $\oou(x), \tzu(y)$.

\begin{theorem}{}{}
Let $\Phi, \Psi$ be pseudo-Anosov flows, which are transverse and regulating
for an $\rrrr$-covered foliation $\gal$. Then $\Phi, \Psi$ are 
topologically conjugate.
\label{main}
\end{theorem}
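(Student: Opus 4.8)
The plan is to construct a $\pi_1(M)$-equivariant homeomorphism $F\colon \oo \to \tz$ and then promote it to the desired conjugacy by means of the transverse foliation $\gn$. The structural fact that drives everything is that both compactifications $\oo \cup \uu$ and $\tz \cup \uu$ are closed disks sharing the \emph{same} boundary circle $\uu$, with the \emph{same} action of $\pi_1(M)$, since by construction $\uu$ depends only on $\gal$ and not on the flow. Accordingly I will build $F$ so that it extends to a homeomorphism of these two disks which is the identity on $\uu$.

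First I would match the invariant laminations of the two flows inside $\uu$. The preceding lemma produces a $\pi_1(M)$-equivariant bijection $x \mapsto y$ between the periodic points of $\oo$ and those of $\tz$, and the remark following it records that $\partial \oos(x) = \partial \tzs(y)$ and $\partial \oou(x) = \partial \tzu(y)$ as subsets of $\uu$, prong by prong. Since we are in the Gromov hyperbolic leaf case, $M$ is atoroidal, so $\Phi$ and $\Psi$ are transitive and their periodic orbits are dense. Hence the straightened periodic stable leaves of $\Phi$ are dense in the geodesic lamination $\lsl$, which is closed by lemmas \ref{break} and \ref{brick}, and likewise for $\Psi$. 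As corresponding periodic stable leaves carry equal ideal point sets in $\uu$, passing to closures shows that the straightened stable lamination of $\Phi$ coincides in $\uu$ with that of $\Psi$; the identical argument identifies the two unstable laminations.

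Next I would reconstruct $F$ from this common lamination data. Every $x$ in $\oo$ is the unique transverse intersection $\oos(x) \cap \oou(x)$, and by lemma \ref{brick} each stable or unstable leaf is determined by its set of ideal points in $\uu$. I therefore set $F(x) = y$, where $\tzs(y)$ and $\tzu(y)$ are the leaves carrying the same ideal point sets; these exist and cross in a single point precisely because the two stable laminations, and the two unstable laminations, agree in $\uu$, so the linking pattern of endpoints is preserved, with the $p$-prong matching taking care of singular orbits. That $F$ is a homeomorphism equal to the identity on $\uu$ then follows from the fact, established when the topology on $\oo \cup \uu$ was set up, that each ideal point of $\uu$ has a neighborhood basis given by leaves of $\lsl$ and $\lul$: endpoints vary continuously, so matching endpoints yields continuity in both directions, and $\pi_1(M)$-equivariance is inherited from its validity on the dense set of periodic points together with continuity.

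Finally I would lift $F$ to a conjugacy. Because $\Phi$ is transverse to and regulating for $\gal$, every orbit of $\wwp$ meets every leaf of $\gn$ in exactly one point, giving a $\pi_1(M)$-equivariant homeomorphism $\mi \cong \oo \times \hp$ recording the orbit and the leaf of $\gn$; the same holds for $\widetilde \Psi$, giving $\mi \cong \tz \times \hp$. Composing, the assignment $z \mapsto (F(\Theta_1(z)), \,\mathrm{leaf}(z))$ defines an equivariant homeomorphism of $\mi$ carrying orbits of $\wwp$ to orbits of $\widetilde \Psi$, and it descends to $M$ as a topological conjugacy of $\Phi$ with $\Psi$. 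I expect the main obstacle to be the lamination-matching and reconstruction step: proving that the closures of the periodic leaves genuinely coincide in $\uu$ for the two flows, and that the endpoint correspondence yields an honest homeomorphism of orbit spaces, with careful attention to continuity at non-periodic leaves and to the bookkeeping forced by singular $p$-prong orbits.
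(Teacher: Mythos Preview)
Your proposal is correct and follows essentially the same route as the paper: define the orbit-space map by matching ideal point sets of stable and unstable leaves in $\uu$, using the periodic-point correspondence from the preceding lemma plus density of periodic orbits to handle the general point, and then lift via the product decomposition $\mi \cong \oo \times \hp$ coming from regularity. The paper organizes the middle step as a direct pointwise limit argument rather than first proving the two geodesic laminations coincide in $\uu$, but this is a cosmetic repackaging of the same construction, and your anticipated obstacles (continuity at non-periodic points, $p$-prong bookkeeping) are exactly the points the paper works through.
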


\begin{proof}{}
We first define a map $f$ from $\oo$ to $\tz$ which extends the correspondence
between periodic points  obtained previously.
Given $x$ in $\oo$, we will let $y$ be the unique point of $\tz$ with

$$\partial \tzs(y) \ = \ \partial \oos(x), \ \ \ 
\partial \tzu(y) \ = \ \partial \oou(x)$$

If $x$ is periodic, this was constructed in the previous lemma.
The previous lemma shows that there is always such a $y$. Lemma
\ref{brick} shows that $y$ is uniquely defined.
This is because the set $\partial \tzs(y)$ determines the stable
leaf  $\tzs(y)$ and likewise for $\tzu(y)$. Hence $y$ is uniquely defined.
If $x$ is not periodic let $x_n$ in $\oo$ which are periodic
and converging to $x$.
We may assume that no $x_n$ is singular since the singular orbits form
a discrete subset of $\oo$.
We can also assume that $(\oos(x_n))$ forms a nested sequence, and so
does $(\oou(x_n))$.
Let $p_n, q_n$ points in $\uu$ with 

$$\partial \oos(x_n) \ = \ \{ p_n, q_n \} \ \ \ {\rm and \ let} \ \ \ \ \{p, q \} \ = \ \partial \oos(x)$$

\noindent
Then up to renaming we can assume that $p_n$ converges to $p$
in $\uu$ and $q_n$ converges to
$q$ in $\uu$. Let $y_n$ in $\tz$ periodic
with $\partial \tzs(y_n) = \{ p_n, q_n \}$. Notice that the $l_n = \tzs(y_n)$ are leaves
of $\tzs$, which are nested in $\tz$. By the identification of $L$ with $\oo$,
then the $l_n$ are associated to uniform
quasigeodesics in $L$ which have ideal points which converge to distinct
points in $\pin L$ (associated to $p, q$ in $\uu$).
Therefore these quasigeodesics converge to a single quasigeodesic in $L$
and so 

$$\tzs(y_n) \ \ \ {\rm converges \ to \ a \ leaf} \ \ l \ \ {\rm of} \ \ \tzs$$

\noindent
Similarly
$\tzu(y_n)$ converges to a leaf $s$ of $\tzu$.
For all $n$, the pairs $\partial \oos(x_n)$, $\partial \oou(x_n)$ link each other in $\uu$,
so the same happens for $\partial \tzs(y_n), \partial \tzu(y_n)$.
It follows that the ideal points of $l$, $s$ link each other in $\uu$ 
otherwise we  would have a leaf of $\tzs$ sharing an ideal point
with a leaf of $\tzu$ $-$ which is disallowed by lemma \ref{brick}.
Therefore

$$y_n \ =  \ \tzs(y_n) \cap \tzu(y_n)$$

\noindent
converges to a point $y$ in $\tz$.
Clearly $\partial \tzs(y)$ contains $\partial \oos(x)$ and similarly
$\partial \tzu(y)$ contains $\partial \oou(x)$.
If $x$ is a singular orbit, one could apply the inverse process
to produce $x'$ in $\oo$, $x'$ singular so that 
$\partial \oos(x') = \partial \tzs(y)$. But then $\partial \oos(x')$ contains
$\partial \oos(x)$ and $x$ is non singular. This is disallowed by the
lemma. Therefore $y$ is non singular and hence

$$\partial \tzs(y) \ = \ \partial \oos(x), \ \ \ 
\partial \tzu(y) \ = \ \partial \oou(x)$$

In addition, since no two stable leaves of $\oos$ can share an ideal point,
then $y$ is well defined.

This map $f: \oo \rightarrow \tz$ is well defined. It is also 
injective. If $f(x_1) = f(x_2)$ then 
$\partial \oos(x_1) = \partial \oos(x_2)$ and $\partial \oou(x_1) = \partial \oou(x_2)$.
By lemma \ref{brick}, the first fact implies that $\oos(x_1) = \oos(x_2)$ and the second
fact implies that $\oou(x_1) = \oou(x_2)$.
Therefore their intersection is $x_1 = x_2$ and the map $f$ is injective.
In addition, the map $f$ clearly has an inverse by doing the same procedure 
from $\Psi$ to $\Phi$. Therefore $f$ is a bijection.

We claim that $f$ is continuous and by symmetry, then the inverse will also
be continuous.
Let then $x$ in $\oo$ and $(x_n)$ a sequence in $\oo$ converging to $x$.
Assume first that $x$ is non singular. Then 

$$\oos(x_n) \ \ {\rm conveges \
to} \ \ \oos(x) \ \ {\rm and} \ \ \partial \oos(x_n) \ \ {\rm converges \ to} \ \ 
\partial \oos(x) \ \ {\rm in} \ \ \uu$$

\noindent
Hence $\partial \tzs(f(x_n))$ converges to $\partial \tzs(f(x))$ and
similarly for $\partial \tzu(f(x_n))$.
This shows that $f(x_n)$ converges to $f(x)$ in $\tz$.

Suppose finally that $x$ is singular. Up to subsequence we may assume that 
$(x_n)$ are all in a sector of $\oos(x)$ bounded by the line leaf $l$
(contained in $\oos(x)$). Then $\oos(x_n)$ converges to $l$ and
$\partial \oos(x_n)$ converges to $\partial l$ in $\uu$.
It follows that 

$$\partial \tzs(f(x_n)) \ \ {\rm converges \ to} \ \  \partial l \ - \  {\rm a \ subset \ of} \ \  \uu$$

\noindent
which is contained in $\partial \tzs(f(x))$.
The same happens if $x_n$ are in $\oos(x)$, that is, $\partial \tzs(f(x_n))$
are contained in $\partial \tzs(f(x))$.
This shows that $\partial \tzs(f(x_n))$ only accumulates in $\partial \tzs(f(x))$.
The same is true for $\partial \tzu(f(x_n))$, which only accumulates in $\partial \tzu(f(x))$.
Then

$$f(x_n) \ \  = \ \  \tzs(f(x_n)) \ \cap \ \tzu(f(x_n))$$

\noindent
only accumulates in 

$$f(x) \ \  = \ \  \tzs(f(x)) \ \cap \ \tzu(f(x))$$

This shows that $f(x_n)$ has to converge to $f(x)$. 
This shows that $f$ is a homeorphism from $\oo$ to $\tz$.

In addition $f$ is $\pi_1(M)$ equivariant: If $g$ is in $\pi_1(M)$
and $x$ is in $\oo$, then 

$$g(\oos(x)) \ = \ \oos(g(x)), \ \ 
g(\oou(x)) \ = \ \oou(g(x))$$

\noindent
have ideal  points

$$\partial \oos(g(x)) \ \ \ {\rm and} \ \ \ \partial \oou(g(x))$$

\noindent
respectively.
Hence these are also the ideal points of 

$$\tzs(f(g(x))),  \ \ \tzu(f(g(x)))$$

\noindent
In addition

$$ \partial \tzs(f(x)) \ = \ \partial \oos(x) \ \ \ \ \ {\rm and} \ \ \ \ \ 
\partial g(\oos(f(x))) \ = \ 
\partial \tzs(g(f(x)))$$

\noindent
Hence they are the same as $\tzs(f(g(x))$.
Since this is also true for the unstable foliations, it follows
that 

$$f(g(x)) \ = \ g(f(x)) \ \ \ - \ \ \ \pi_1(M) \ {\rm equivariance}.$$

\vskip .1in
We now finish the proof of topological conjugacy between $\Phi$ and $\Psi$. We define a map
$h: \mi \rightarrow \mi$ as follows. Given $p$ in $\mi$, then $p$ is in a leaf
$L$ of $\gn$. Define

$$h(p) \ \ = \ \ \widetilde \Psi_{\rrrr}(f(\Theta(p))) \cap L,$$

\noindent
here $\Theta(p)$ is in $\oo$ and $f(\Theta(p))$ is in ${\cal T}$.
Essentially we look at the orbit $\alpha = \wwp_{\rrrr}(p)$ of the flow $\wwp$ through $p$
and consider the corresponding orbit of $\widetilde \Psi$ under the map $f$: that is the 
orbit 
$\widetilde \Psi_{\rrrr}(f(\Theta(p)))$ 
of $\widetilde \Psi$. Then we intersect this orbit of $\widetilde \Psi$ with $L$. 
This map $h$ preserves
the leaves of $\gn$ $-$ not just the foliation $\gn$, but the leaves themselves.
In addition $h$ sends
orbits of $\wwp$ to orbits of $\widetilde \Psi$.
The map $h$ is clearly continuous and hence defines a homeomorphism of $\mi$.
From the equivariance of $f$ it follows that $h$ is also equivariant,
that is for any $g$  and $p$ in $\mi$, then 
$h(g(p))  = g(h(p))$.
Therefore $h$ induces a homeomorphism of $M$, which sends orbits of $\Phi$ to
orbits of $\Psi$. hence $\Phi$ and $\Psi$ are topologically
conjugate. This finishes the proof of theorem \ref{main}.
\end{proof}

{\footnotesize
{
\setlength{\baselineskip}{0.01cm}

\noindent
Florida State University

\noindent
Tallahassee, FL 32306-4510

}
}

\end{document}